\pgfplotsset{compat=1.10}
\def\phi{\varphi}
\def\ee{{\mathrm e}}
\DeclareMathOperator\dom{{\text{\rm dom\,}}} %Domain
\DeclareMathOperator{\Div}{div} %Divergence
\DeclareMathOperator{\diver}{div}
\DeclareMathOperator{\spann}{span}
\DeclarePairedDelimiter{\abs}{\lvert}{\rvert}
\DeclareMathOperator{\Real}{Re}
\newcommand\ga[1]{\Gamma_{#1}}
\DeclareMathOperator{\om}{{\Omega}}
\newcommand\bnd{{\partial \Omega}}
\newcommand\rd{{\mathbb{R}}^d}
\newcommand\lv[1]{\lambda_{#1}(V)} %Dirichlet eigenvalues of the Schrödinger operator
\newcommand\lvg[1]{\lambda^{\Gamma}_{#1}(V)} %Mixed eigenvalues of the Schrödinger operator
\newcommand\hu{H^1(\Omega)} %Sobolev 1
\newcommand\huz{H^1_0(\Omega)} %Sobolev 1 with Dirichlet BC
\newcommand\hmix{H^1_{0,\Gamma}(\Omega)} %Sobolev 1 with Dirichlet BC on Gamma
\newcommand\hd{H^2(\Omega)} %Sobolev 2
\newcommand\qa{\mathfrak{a}} %Quadratic form
\newcommand\lspan{\mathrm{span}} 
\def\dist{{\text{\rm dist}}}
\DeclareMathOperator{\R}{\mathbb{R}}
\DeclareMathOperator{\C}{\mathbb{C}}
\renewcommand\ga{\Gamma}
\newcommand\gac{{\Gamma^c}}
\newcommand\ld[1]{L^2(#1)}
\renewcommand\setminus{\mathbin{\mathpalette\rsetminusaux\relax}}
\newcommand\rsetminusaux[2]{\mspace{-4mu}
  \raisebox{\rsmraise{#1}\depth}{\rotatebox[origin=c]{-20}{$#1\smallsetminus$}}
 \mspace{-4mu}
}
\newcommand\rsmraise[1]{%
  \ifx#1\displaystyle .8\else
    \ifx#1\textstyle .8\else
      \ifx#1\scriptstyle .6\else
        .45%
      \fi
    \fi
  \fi}
\newcommand*{\rom}[1]{\expandafter\@slowromancap\romannumeral #1@}
\definecolor{darkspringgreen}{rgb}{0.09, 0.45, 0.27}
\definecolor{darktangerine}{rgb}{1.0, 0.66, 0.07}
\definecolor{amber}{rgb}{1.0, 0.75, 0.0}
\definecolor{purple}{rgb}{0.6, 0.4, 0.8}
\definecolor{amber(sae/ece)}{rgb}{1.0, 0.49, 0.0}
\newtheorem{theorem}{Theorem}[section]
\newtheorem*{thm*}{Theorem}
\newtheorem{proposition}[theorem]{Proposition}
\newtheorem{corollary}[theorem]{Corollary}
\newtheorem{lemma}[theorem]{Lemma}
\theoremstyle{definition}
\theoremstyle{definition}
\newtheorem{definition}[theorem]{Definition}
\newtheorem{example}[theorem]{Example}
\newtheorem{remark}[theorem]{Remark}
\def\th@newremark{\th@remark\thm@headfont{\bfseries}}
\theoremstyle{newremark}
\numberwithin{equation}{section}
\title[]{Inequalities for eigenvalues of Schr\"odinger operators with mixed boundary conditions}
\author[N.~Aldeghi]{Nausica Aldeghi}
\address{Matematiska institutionen \\ Stockholms universitet \\
106 91 Stockholm \\
Sweden}
\email{nausica.aldeghi@math.su.se}
\begin{document}

\maketitle

\begin{abstract}
We consider the eigenvalue problem for the Schr\"odinger operator on bounded, convex domains with mixed boundary conditions, where a Dirichlet boundary condition is imposed on a part of the boundary and a Neumann boundary condition on its complement. We prove inequalities between the lowest eigenvalues corresponding to two different choices of such boundary conditions on both planar and higher-dimensional domains. We also prove an inequality between higher order mixed eigenvalues and pure Dirichlet eigenvalues on multidimensional polyhedral domains.
\end{abstract}

\section{Introduction}

Let $\Omega$ be a bounded, connected Lipschitz domain in $\rd$, $d \ge 2$, and let $V: \om \to \mathbb{R}$ be a measurable and bounded potential. We consider the eigenvalue problem for the Schr\"odinger operator $-\Delta_\ga +V$ subject to a Dirichlet boundary condition on a non-empty, relatively open subset $\ga$ of the boundary $\bnd$ and a Neumann boundary condition on its complement $\gac$, that is,
\begin{align}
\label{eq:ev problem intro}
\begin{split}
 \begin{cases}
 \hspace*{1.6mm} - \Delta u + Vu = \lambda u & ~~\text{in}~\Omega, \\
 \hspace{7.4mm} \phantom{\,\,\,+ Vu} u = 0 & ~~\text{on}~\Gamma, \\
 \hspace*{3.4mm} \phantom{\,\,\,\,+ Vu} \partial_\nu u = 0 & ~~\text{on}~\Gamma^c,
 \end{cases}
 \end{split}
\end{align}
where $\nu$ denotes the unit normal vector field defined almost everywhere on $\bnd$ pointing outwards. The operator $-\Delta_\ga +V$, including the Laplacian $-\Delta_\ga$ corresponding to the case $V=0$, can be defined via the corresponding quadratic form; it is self-adjoint in $L^2(\om)$, semibounded below by the infimum of $V$ and has a purely discrete spectrum, see Section \ref{sec:preliminaries} for more details. The problem \eqref{eq:ev problem intro} thus admits a discrete sequence of eigenvalues which is bounded below by the infimum of $V$ and which we denote by 
\begin{equation*}
\lambda_1^\ga(V) < \lambda_2 ^\ga(V) \le \lambda_3^\ga(V) \le \ldots
\end{equation*}
counted according to their multiplicities. If $\ga$ coincides with the whole boundary \eqref{eq:ev problem intro} is a pure Dirichlet problem; we denote the resulting eigenvalues by $\lambda_k(V)$ for $k \in \mathbb{N}$. It follows from the variational characterization of the eigenvalues that the inequalities
\begin{equation}
\label{eq:trivial estimates}
\lambda_k^{\ga'}(V) \le \lambda_k^\ga(V) \le \lambda_k(V), \quad k \in \mathbb{N},
\end{equation}
hold for $\ga' \subset \ga \subseteq \bnd$. The first inequality is strict if $\ga \setminus \ga'$ has non-trivial interior and can be proved analogously to \cite[Proposition 2.3]{LR17};
%\marginpar{\tiny This is not proved for the Schr. operator there, you do have a proof for it in the intro which you could add to this paper.}
the second follows directly from the variational principles, cf. \eqref{eq:minmax} and \eqref{eq:minmax pure}. The aim of the present article is to establish inequalities for the eigenvalues $\lambda_k^\ga(V)$ which separately improve both trivial estimates of \eqref{eq:trivial estimates}; our results can thus be divided in two different types depending on whether the estimate depends on another mixed eigenvalue or a pure Dirichlet eigenvalue.

In the first type, we limit our analysis to the lowest eigenvalues of Schr\"odinger operators and compare the eigenvalues $\lambda_1^\ga(V)$ of $-\Delta_\ga + V$ with the eigenvalues $\lambda_1^{\ga'}(V)$ of $-\Delta_{\ga'} + V$ corresponding to a different choice of $\ga'$ on the same domain $\om$. We know from  \eqref{eq:trivial estimates} that $\lambda_1^{\ga'}(V)$ is smaller than $\lambda_1^{\ga}(V)$ if $\ga'$ is contained in $\ga$, but this is not always the case if $\ga$ and $\ga'$ do not satisfy an inclusion and $\ga'$ is smaller than $\ga$, that is, $\lambda_1^{\ga}(V)$ does not depend monotonously on the size of $\ga$, cf. the counterexample \cite[Remark 3.3]{Siu16} for the eigenvalues of the Laplacian. Some results comparing $\lambda_1^{\ga'}(V)$ and $\lambda_1^\ga(V)$ for $\ga$ and $\ga'$ disjoint have been established in the case $V=0$ of the Laplacian on planar domains, cf. \cite{AR23,  AR24, Siu16}; to the best of our knowledge, the case of the Schr\"odinger operator, and even the case of the Laplacian in higher dimensions, has not received attention. In this paper we extend the results of \cite{AR23} to Schr\"odinger operators on planar and higher-dimensional domains. Namely, we prove the inequality
\begin{equation}
\label{eq:lowest evs intro}
\lambda_1^{\ga'}(V) \le \lambda_1^\ga(V)
\end{equation}
on convex domains $\om \subset \rd$, $d \ge 2$, if $\ga'$ is either a straight line segment or the subset of an hyperplane and $\overline{\ga} \cup \overline{\ga'} = \bnd$ (Corollary \ref{cor:schrodinger2 gac} and Theorem \ref{thm:higherdim}, respectively), and for $d=2$ in the case in which $\bnd \setminus (\overline{\ga} \cup \overline{\ga'})$ has non-trivial interior (Theorem \ref{thm:schrodinger2 gaprime}); for $d=2$ we prove that \eqref{eq:lowest evs intro} is always strict. In all three results we regulate the potential $V$ in the direction normal to $\ga'$; if $\bnd \setminus (\overline{\ga} \cup \overline{\ga'})$ has non-trivial interior we impose additional conditions on its geometry and on the behaviour of the potential along it. In particular, the geometric conditions on $\bnd$ imply that $\ga'$ is smaller than $\ga$. We wish to point out that, to the best of our knowledge, the higher-dimensional result constitutes a novelty even for eigenvalues of the Laplacian.

As for the second type of result, in Theorem \ref{thm:higherorder evs} we prove the inequality
\begin{equation}
\label{eq:higherdim intro}
\lambda_{k+m}^{\Gamma}(V) \le \lambda_k(V), \quad k \in \mathbb{N},
\end{equation}
between mixed Dirichlet-Neumann eigenvalues and Dirichlet eigenvalues of the Schr\"odinger operator on bounded, convex, polyhedral domains $\om \subset \rd$, $d \ge 2$; this inequality can be regarded as a unification of \cite[Theorem 4.2]{R21} and \cite[Theorem 4.1]{LR17}. We require the potential $V$ to be constant in some directions; $m$ is the number of these directions which are in addition tangential to the Dirichlet portion $\ga$ of the boundary. Inequality \eqref{eq:higherdim intro} exhibits some dimension dependence; if for instance $\ga$ is one face of $\om$ and $V$ a potential which only depends on the direction normal to $\ga$, \eqref{eq:higherdim intro} implies
\begin{equation*}
\lambda_{k+d-1}^{\Gamma}(V) \le \lambda_k(V), \quad k \in \mathbb{N}.
\end{equation*}

The proofs are variational and rely on choosing an appropriate linear combination of eigenfunctions or of their partial derivatives as test function; for the inequalities of the type \eqref{eq:lowest evs intro} we choose an appropriate directional derivative of an eigenfunction for $\lambda_1^\ga(V)$, while for \eqref{eq:higherdim intro} we choose a linear combination of eigenfunctions for $\lambda_1(V), \ldots, \lambda_k(V)$ and of their partial derivatives. In order to estimate the Rayleigh quotient of these test functions we will make use of three different integral identities for the second partial derivatives of Sobolev functions, one of which is proved in this paper, cf. Lemma \ref{lem:GrisvardDimRed}.

This article is organized as follows. In Section \ref{sec:preliminaries} we provide some preliminaries. In Section \ref{sec:planar} and Section \ref{sec:higher dim} we prove inequalities of the type \eqref{eq:lowest evs intro} on planar and higher-dimensional domains respectively. Section \ref{sec:kth ev} is devoted to the proof of inequality \eqref{eq:higherdim intro}.

\section{Preliminaries}
\label{sec:preliminaries}

In this section we fix some notation and present some preliminary results.

\subsection{Schr\"odinger operators and function spaces}

Throughout the whole paper, $\Omega \subset \mathbb{R}^d$, $d \ge 2$, is a bounded, connected Lipschitz domain, see e.g. \cite[Definition 3.28]{M00}; we will make the additional assumption that the boundary $\bnd$ is piecewise smooth. By Rademacher's theorem, for almost all $x \in \bnd$ there exists a uniquely defined exterior unit normal vector $\nu(x)$; $\bnd$ is equipped with the standard surface measure denoted by $\sigma$, cf. \cite{M00}. Note that $\om$ being a Lipschitz domain entails that $\bnd$ does not contain any cusp. 

We denote by $H^s(\om)$, $s > 0$, the $L^2$-based Sobolev space of order $s$ on $\om$; on the boundary we will make use of the Sobolev space $H^{1/2}(\bnd)$ and its dual space $H^{-1/2}(\bnd)$. Recall that there exists a unique bounded, everywhere defined, surjective trace map from $H^1(\om)$ onto $H^{1/2}(\bnd)$ which continuously extends the mapping
\begin{equation*}
 C^\infty(\overline{\om}) \ni u \mapsto u |_{\bnd};
\end{equation*}
we write $u|_{\bnd}$ for the trace of a function $u \in H^1(\om)$. Moreover, for $u \in H^1 (\Omega)$ satisfying $\Delta u\in L^2(\Omega)$ in the distributional sense we define the normal derivative $\partial_\nu u |_{\partial \Omega}$ of $u$ at $\partial \Omega$ to be the unique element in $H^{- 1/2} (\partial \Omega)$ which satisfies the first Green identity
\begin{align}
\label{eq:green}
 \int_\Omega \nabla u \cdot \nabla \overline{v} + \int_\Omega (\Delta u) \overline v = (\partial_\nu u |_{\partial \Omega}, v |_{\partial \Omega} )_{\partial \Omega}, \quad v \in H^1 (\Omega),
\end{align}
where $( \cdot, \cdot)_{\partial \Omega}$ denotes the sesquilinear duality between $H^{1/2} (\partial \Omega)$ and $H^{- 1/2} (\partial \Omega)$. For sufficiently regular $u$, e.g., $u \in H^2 (\Omega)$, the weakly defined normal derivative $\partial_\nu u |_{\partial \Omega}$ coincides with $\nu \cdot \nabla u |_{\partial \Omega}$ almost everywhere on $\partial \Omega$; in this case the duality in \eqref{eq:green} may be replaced by the boundary integral of $\nu \cdot \nabla u |_{\partial \Omega} \overline v |_{\partial \Omega}$ with respect to $\sigma$. 

We now briefly recall the definition of the Schr\"odinger operator with mixed boundary conditions. To this purpose, for any non-empty, relatively open set $\Sigma \subset \bnd$ we denote by $H_{0, \Sigma}^1 (\Omega)$ the Sobolev space
\begin{equation*}
 H_{0, \Sigma}^1 (\Omega) = \left\{ u \in H^1(\om): u |_{\Sigma}=0 \right\},
\end{equation*}
where $u|_\Sigma$ denotes the restriction of the trace $u|_{\bnd}$ to $\Sigma$; if $\Sigma = \bnd$ we write
\begin{equation*}
 H_0^1 (\Omega) = \left\{ u \in H^1(\om): u |_{\bnd}=0 \right\}.
\end{equation*}
We say that a distribution $\psi \in H^{-1/2}(\bnd)$ vanishes on $\Sigma$, and write $\psi|_\Sigma = 0$, if
\begin{equation*}
 \left (\psi, u|_{\bnd} \right)_{\bnd} = 0
\end{equation*}
holds for all $u \in H_{0, \partial \Omega \setminus \overline{\Sigma}}^1 (\Omega)$. For $\ga$ relatively open, non-empty subset of $\bnd$ and $V:\om \to \mathbb{R}$ measurable and bounded we define the Schr\"odinger operator $-\Delta_\ga + V$ subject to a Dirichlet boundary condition on $\ga$ and a Neumann boundary condition on $\gac$ as
\begin{equation*}
(-\Delta_\ga + V)u = -\Delta u + Vu
\end{equation*}
with domain
\begin{equation*}
\dom(-\Delta_\ga + V) = \{ u \in H^1_{0,\ga}(\om) : -\Delta u + Vu \in L^2(\om),\, \partial_\nu u|_{\gac} = 0\}
\end{equation*}
where $V$ acts as a multiplication operator; note that if $V=0$  the Schr\"odinger operator reduces to the (negative) Laplacian with mixed Dirichlet-Neumann boundary conditions. The operator $-\Delta_\ga + V$ corresponds in the sense of \cite[Chapter VI, Theorem 2.1]{Kato} to the quadratic form
\begin{equation}
\label{eq:quadrform schrodinger}
\hmix \ni u \mapsto \int_{\om} |\nabla u|^2 + V |u|^2
\end{equation}
which is semibounded below by the infimum of $V$ and closed; $-\Delta_\ga + V$ is thus self-adjoint in $L^2(\om)$ and its spectrum consists of a discrete sequence of eigenvalues semibounded below by the infimum of $V$ with finite multiplicities converging to $+\infty$. We will make use of the representation of the eigenvalues in terms of the min-max principle
\begin{equation}
\label{eq:minmax}
\lambda_k^\ga(V) = \min_{\substack{L \subset \hmix \\ \dim L=k}} \max_{u \in L \setminus \{0\}} \frac{\int_{\om} (|\nabla u|^2 + V |u|^2)}{\int_{\om} |u|^2}, \quad k \in \mathbb{N};
\end{equation}
in particular the lowest eigenvalue $\lambda_1^\ga(V)$ is non-degenerate, simple and can be expressed by 
\begin{align}
\label{eq:variational}
\lambda_1^\ga(V) =\min_{\substack{u \in \hmix \\ u \neq 0}} \frac{\int_{\Omega} (|\nabla u|^2 + V |u|^2)}{\int_{\Omega} |u|^2}
\end{align}
where $u \in \hmix \setminus \{0\}$ is an eigenfunction of $-\Delta_\ga + V$ corresponding to $\lambda_1^\ga(V)$ if and only if it minimizes \eqref{eq:variational}. As a consequence of \eqref{eq:variational} and of the fact that the mixed Dirichlet-Neumann problem for the Laplacian $-\Delta_\ga$ has positive eigenvalues as soon as $\ga$ is non-empty we get that the lower bound of the infimum of $V$ for the eigenvalues for the eigenvalues $\lambda_k^\ga(V)$ is actually strict, as follows.

\begin{lemma}
\label{lemma:lowest ev strict}
Let $\Omega \subset \mathbb{R}^d$, $d \ge 2$, be a bounded, connected Lipschitz domain, $\ga \subset \bnd$ relatively open and non-empty, and $V: \om \to \mathbb{R}$ measurable and bounded. Then 
\begin{equation*}
\lambda_1^\ga(V) >\inf_{x \in \om} V(x).
\end{equation*}
\end{lemma}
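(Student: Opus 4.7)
The plan is to combine the variational characterization \eqref{eq:variational} with the strict positivity of the first mixed Dirichlet--Neumann eigenvalue of the pure Laplacian. Writing $m \coloneqq \inf_{x \in \Omega} V(x)$ and taking any $u \in H^1_{0,\Gamma}(\Omega) \setminus \{0\}$, the pointwise bound $V \geq m$ a.e.\ in $\Omega$ yields
\begin{equation*}
\frac{\int_\Omega (|\nabla u|^2 + V|u|^2)}{\int_\Omega |u|^2} \;\geq\; \frac{\int_\Omega |\nabla u|^2}{\int_\Omega |u|^2} + m.
\end{equation*}
Minimizing both sides over $u \in H^1_{0,\Gamma}(\Omega) \setminus \{0\}$ and using \eqref{eq:variational} applied both to the potential $V$ and to the zero potential gives
\begin{equation*}
\lambda_1^\Gamma(V) \;\geq\; \lambda_1^\Gamma(0) + m.
\end{equation*}

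Thus it suffices to show $\lambda_1^\Gamma(0) > 0$, i.e.\ the first eigenvalue of the mixed Dirichlet--Neumann Laplacian is strictly positive. This is equivalent to a Poincar\'e-type inequality of the form $\|u\|_{L^2(\Omega)}^2 \leq C \|\nabla u\|_{L^2(\Omega)}^2$ for $u \in H^1_{0,\Gamma}(\Omega)$. Since $\Gamma$ is relatively open and non-empty in $\partial\Omega$, it has positive surface measure, and such a Poincar\'e inequality is standard on bounded connected Lipschitz domains (it follows, for instance, from a compactness-contradiction argument: if it failed, a normalized minimizing sequence in $H^1_{0,\Gamma}(\Omega)$ would converge, by Rellich--Kondrachov, to a non-zero constant whose trace on $\Gamma$ would vanish, forcing the constant to be zero, a contradiction).

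Putting these two ingredients together yields $\lambda_1^\Gamma(V) \geq \lambda_1^\Gamma(0) + m > m = \inf_{x \in \Omega} V(x)$, as desired. I expect no substantial obstacle: the entire argument is a one-line consequence of the variational principle once the Poincar\'e inequality on $H^1_{0,\Gamma}(\Omega)$ is recorded. The only mildly delicate point is verifying the Poincar\'e inequality under the assumed regularity of $\Omega$ and openness of $\Gamma$; this is classical and could alternatively be cited directly from a reference such as \cite{M00}.
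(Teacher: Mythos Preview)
Your proof is correct and follows essentially the same approach as the paper: both derive $\lambda_1^\Gamma(V) \ge \lambda_1^\Gamma(0) + \inf V$ from the variational characterization and then invoke $\lambda_1^\Gamma(0) > 0$. The only differences are cosmetic---you minimize the Rayleigh quotient directly while the paper fixes a minimizer first---and you additionally sketch the Poincar\'e argument for $\lambda_1^\Gamma(0) > 0$, which the paper simply asserts.
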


\begin{proof}
Since $\lambda_1^\ga(V)  \ge \inf_{x \in \om} V(x)$ we only need to prove $\lambda_1^\ga(V) \neq \inf_{x \in \om} V(x)$. Let $v \in \hmix$ be a minimizer of \eqref{eq:minmax}, that is,
\begin{equation}
\label{eq:estimate lemma ev}
\int_{\Omega} |\nabla v|^2 + V |v|^2 = \lambda_1^\ga(V) \int_{\Omega} |v|^2.
\end{equation}
holds. On the other hand, the eigenvalue $\lambda_1^\ga$ of the Laplacian corresponding to the same boundary conditions can be computed by setting $V=0$ in \eqref{eq:minmax}, from which we get
\begin{equation*}
\int_{\Omega} |\nabla v|^2  \ge \lambda_1^\ga \int_{\Omega} |v|^2
\end{equation*}
since $v \in \hmix$. We are then able to estimate the right-hand side of \eqref{eq:estimate lemma ev} as follows,
\begin{equation*}
\lambda_1^\ga(V) \int_{\Omega} |v|^2 \ge \lambda_1^\ga \int_{\Omega} |v|^2 + \inf_{x \in \om} V(x) \int_{\Omega} |v|^2,
\end{equation*}
that is,
\begin{equation*}
\lambda_1^\ga(V) \ge \lambda_1^\ga + \inf_{x \in \om} V(x),
\end{equation*}
from which it follows immediately that $\lambda_1^\ga(V) \neq \inf_{x \in \om} V(x)$ as $\lambda_1^\ga > 0$ if $\ga$ is non-empty.
\end{proof}

It follows immediately from Lemma \ref{lemma:lowest ev strict} that if $V$ takes only non-negative values then $\lambda_1^\ga(V) > 0$. The Schr\"odinger operator with a (pure) Dirichlet boundary condition, denoted by $-\Delta_D + V$, is defined analogously as
\begin{equation*}
(-\Delta_D + V)u = -\Delta u + Vu, \quad \dom(-\Delta_D + V) = \{ u \in H^1_0(\om) : -\Delta u + Vu \in L^2(\om) \};
\end{equation*}
it is also self-adjoint in $L^2(\om)$, and its spectrum consists of a discrete sequence of eigenvalues semibounded below by the infimum of $V$ with finite multiplicities converging to $+\infty$. These eigenvalues can be expressed in terms of the min-max principle 
\begin{equation}
\label{eq:minmax pure}
\lambda_k(V) = \min_{\substack{L \subset H^1_0(\om) \\ \dim L=k}} \max_{u \in L \setminus \{0\}} \frac{\int_{\om} (|\nabla u|^2 + V |u|^2)}{\int_{\om} |u|^2}, \quad k \in \mathbb{N}.
\end{equation}

\subsection{Some useful statements}

We conclude the preliminaries by collecting a few statements on the functions in the domain of Schr\"odinger operators subject to mixed Dirichlet-Neumann boundary conditions. We start with the following observation on eigenfunctions, which is a simple consequence of a unique continuation principle. It will be used to show that the eigenvalue inequalities of Section \ref{sec:planar} are always strict; the proof is analogous to \cite[Lemma 4.2]{AR23}.

\begin{lemma}
\label{lem:continuation principle}
Let $\Omega \subset \rd$ be a bounded, connected Lipschitz domain. Let $V:\om \to \R$ measurable and bounded, let $\lambda \in \mathbb{R}$ and let $u \in H^1(\Omega)$ be such that $-\Delta u + V u= \lambda u$ holds in the distributional sense. If $\Lambda \subset \partial \Omega$ is a relatively open, non-empty subset such that $u|_{\Lambda}=0$ and $\partial_{\nu} u|_{\Lambda}=0$ then $u=0$ identically on $\Omega$.
\end{lemma}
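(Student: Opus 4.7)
The plan is to combine a zero-extension argument across $\Lambda$ with the classical interior unique continuation principle, following closely the analogous \cite[Lemma 4.2]{AR23}. First, I would pick a point $x_0 \in \Lambda$ at which $\bnd$ is smooth; such a point exists because $\Lambda$ is relatively open and non-empty while $\bnd$ is piecewise smooth, so the singular set of $\bnd$ cannot fill the relatively open set $\Lambda$. Applying a local $C^\infty$-diffeomorphism that flattens the boundary near $x_0$, I obtain an open ball $B \subset \rd$ such that, in the new coordinates, $B \cap \om$ coincides with $B^+ := B \cap \{y_d > 0\}$ and $B \cap \bnd$ with $B \cap \{y_d = 0\} \subset \Lambda$. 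As the argument is local, the smooth change of variables only perturbs $-\Delta$ into a uniformly elliptic divergence-form operator with smooth coefficients, which does not affect any of the subsequent ingredients.

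Next, I would extend $u$ by zero to $B^- := B \cap \{y_d < 0\}$: since $u \in H^1(B^+)$ has vanishing trace on $B \cap \{y_d = 0\}$, the extension $\tilde u$ lies in $H^1(B)$. A direct computation using the first Green identity \eqref{eq:green}, applied to an arbitrary test function $\varphi \in C^\infty_c(B)$ on $B^+$, would then yield
\begin{equation*}
-\Delta \tilde u + \tilde V \tilde u = \lambda \tilde u \quad \text{in } \cD'(B),
\end{equation*}
where $\tilde V := V\chi_{B^+}$ is bounded. The cancellation of the two boundary contributions along $B \cap \{y_d = 0\}$ requires \emph{both} hypotheses: $u|_\Lambda = 0$ kills the term involving $\pnu \varphi$, while $\pnu u|_\Lambda = 0$ kills the term involving $\varphi$ itself. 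At this stage, interior elliptic regularity gives $\tilde u \in H^2_{\loc}(B)$, and since $\tilde u \equiv 0$ on the non-empty open set $B^-$, Aronszajn's unique continuation theorem for Schrödinger operators with bounded potential forces $\tilde u \equiv 0$ on $B$. In particular, $u$ vanishes on a neighbourhood of $x_0$ inside $\om$.

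Finally, I would propagate this vanishing to all of $\om$ by a standard connectedness argument: the set of points in $\om$ around which $u$ vanishes identically is open by construction, non-empty by the previous step, and its complement is open by another application of the interior Aronszajn theorem; connectedness of $\om$ then forces $u \equiv 0$ on $\om$. The main obstacle is the extension step across the boundary, and the key insight is that each of the two conditions $u|_\Lambda = 0$ and $\pnu u|_\Lambda = 0$ plays a distinct role — the first to keep $\tilde u$ in $H^1$, the second to avoid a singular jump in the normal derivative — so that $\tilde u$ truly is a distributional solution on the enlarged ball $B$, after which the classical interior unique continuation principle finishes the argument.
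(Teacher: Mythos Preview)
Your proposal is correct and follows precisely the approach the paper intends: the paper does not give its own proof but defers to \cite[Lemma~4.2]{AR23}, and your zero-extension across a flattened piece of $\Lambda$ followed by Aronszajn-type interior unique continuation and a connectedness argument is exactly that proof. One small clarification: once $u|_\Lambda = 0$ guarantees $\tilde u \in H^1(B)$, the distributional identity $-\Delta \tilde u + \tilde V \tilde u = \lambda \tilde u$ follows from a \emph{single} integration by parts on $B^+$, so only the boundary term $\int (\partial_\nu u)\varphi$ appears and is killed by $\partial_\nu u|_\Lambda = 0$; your description via two boundary terms is not wrong (it corresponds to the second Green identity), just slightly redundant.
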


Next, we present two sufficient criteria for functions in the domain $\dom (- \Delta_\Gamma+V)$ of $- \Delta_\Gamma+V$ to belong to the Sobolev space $H^2 (\Omega)$. Such regularity holds under assumptions on the angles at the corners at which the transition between Dirichlet and Neumann boundary conditions takes place. The first statement concerns planar domains and is rather well-known; we refer to e.g. \cite[Theorem 2.3.7]{G92} for a proof. 

\begin{proposition}
\label{prop:regularity}
Assume that $\Omega \subset \R^2$ is a bounded Lipschitz domain with piecewise smooth boundary and that $\Gamma, \Gamma^c \subset \partial \Omega$ are relatively open such that $\overline{\Gamma} \cup \overline{\Gamma^c} = \partial \Omega$ holds. Moreover, assume that all angles at which  $\Gamma$ and $\Gamma^c$ meet are less or equal $\pi/2$ and that the angles at all interior corners of $\bnd$ are less or equal $\pi$.  Let $V:\om \to \R$ measurable and bounded. Then 
\begin{align*}
 \dom{(-\Delta_\Gamma + V)} \subset  H^2(\om).
\end{align*}
\end{proposition}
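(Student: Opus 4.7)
My plan is to reduce the statement for the Schr\"odinger operator $-\Delta_\ga+V$ to the corresponding regularity assertion for the pure mixed Dirichlet--Neumann Laplacian $-\Delta_\ga$ and then invoke the classical corner analysis for second order elliptic problems in planar domains. Since $V$ is bounded and every $u \in \dom(-\Delta_\ga+V)$ belongs to $\hmix \subset L^2(\om)$, the product $Vu$ lies in $L^2(\om)$, so that
\begin{equation*}
-\Delta u \,=\, \bigl(-\Delta u + V u\bigr) - V u \,\in\, L^2(\om).
\end{equation*}
Together with the boundary conditions $u|_\ga = 0$ and $\pnu u|_{\gac} = 0$ inherited from the definition of $\dom(-\Delta_\ga+V)$, this shows $u \in \dom(-\Delta_\ga)$. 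It therefore suffices to prove the inclusion in the case $V=0$.

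For this reduced statement, standard interior elliptic regularity yields $u \in H^2_{\mathrm{loc}}(\om)$, while the usual boundary $H^2$-estimates for the pure Dirichlet or pure Neumann problem on a smooth arc give the desired regularity in a neighborhood of every boundary point at which $\bnd$ is smooth and carries only one type of condition. The only points that require additional analysis are the finitely many \emph{exceptional points}, namely the interior corners of $\bnd$ where the boundary is not smooth but the prescribed condition does not change, and the transition points at which $\ga$ and $\gac$ meet.

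Near any such point, a bi-Lipschitz straightening of the boundary reduces the situation to a model mixed boundary value problem in an infinite planar sector $K_\omega$ of opening $\omega$, with $-\Delta u \in L^2(K_\omega)$ and the prescribed combination of conditions on the two rays. Separating variables in polar coordinates $(r,\theta)$ yields homogeneous harmonic functions $r^\alpha \Phi(\theta)$; the admissible exponents $\alpha$ are determined by the one-dimensional eigenvalue problem for $\Phi$ on $(0,\omega)$ with the prescribed conditions at the endpoints. An explicit computation gives $\alpha \in (\pi/\omega)\mathbb{N}$ for a pure Dirichlet or pure Neumann corner and $\alpha \in (\pi/(2\omega))(2\mathbb{N}_0+1)$ for a mixed Dirichlet--Neumann corner. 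By the Kondrat'ev-type theory for elliptic boundary value problems in corner domains, $u$ then admits a decomposition into a regular part in $H^2$ plus a finite linear combination of singular functions $r^\alpha \Phi(\theta)$, one for each such exponent $\alpha \in (0,1)$. The hypotheses $\omega \le \pi$ at interior corners and $\omega \le \pi/2$ at transition corners force every admissible $\alpha$ to be $\ge 1$, which eliminates all singular contributions and gives $u \in H^2$ near the exceptional point. Covering $\overline{\om}$ with finitely many neighborhoods of these types concludes the proof.

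The main obstacle is the last step: producing the singular-plus-regular decomposition and identifying its exponents requires the full Kondrat'ev/Grisvard machinery, in particular the spectral analysis of the Mellin operator pencil associated with the specific mixed condition on the model sector and the controlled lifting of the decomposition from the sector back to $\om$. This is precisely the content of \cite[Chapter~2]{G92} leading up to Theorem 2.3.7, and is the reason one usually cites that monograph rather than reproducing the argument.
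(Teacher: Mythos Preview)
Your proposal is correct and follows the same route as the paper: the paper does not give an argument for this proposition at all but simply cites \cite[Theorem~2.3.7]{G92}, and your sketch is precisely an outline of the content behind that citation, together with the (correct and necessary) reduction from $-\Delta_\ga+V$ to $-\Delta_\ga$ via $Vu\in L^2(\om)$. Your identification of the singular exponents and the way the angle hypotheses rule them out is accurate; the only point you leave implicit is the borderline case $\alpha=1$ (angle exactly $\pi$ at an interior corner or exactly $\pi/2$ at a transition corner), where one must check that no logarithmic term appears, but this is covered by the Grisvard reference you invoke.
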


The second statement holds for more general domains of dimension $d \ge 2$ and features the additional assumption that the subset of the boundary which is subject to a Neumann boundary condition is flat, i.e. it is a subset of a hyperplane; the proof relies on a reflection argument with respect to this portion of the boundary. Note that for planar domains this statement is a special case of Proposition \ref{prop:regularity}.

\begin{proposition}
\label{prop:regularity reflection}
Let $\Omega \subset \R^d$, $d \ge 2$, be a bounded Lipschitz domain with piecewise smooth boundary and let $\Gamma, \Gamma^c \subset \partial \Omega$ relatively open such that $\overline{\Gamma} \cup \overline{\Gamma^c} = \partial \Omega$ holds and such that $\ga^c$ is a subset of a $d$-dimensional hyperplane. Moreover, assume that all angles at which $\Gamma$ and $\Gamma^c$ meet are less or equal $\pi/2$ and that the angles at all interior corners of $\bnd$ are less or equal $\pi$. Let $V:\om \to \R$ measurable and bounded. Then 
\begin{align*}
 \dom{(-\Delta_\Gamma + V)} \subset  H^2(\om).
\end{align*}
\end{proposition}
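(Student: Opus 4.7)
The plan is to double the domain by reflection across the hyperplane $\Pi$ containing $\gac$ and thereby reduce the mixed problem to a pure Dirichlet problem on the enlarged domain; the classical $H^2$-regularity for the Dirichlet Laplacian on Lipschitz domains with piecewise smooth boundary and interior angles bounded by $\pi$ (the same type of result that underlies Proposition~\ref{prop:regularity}) then completes the proof. The point is that even reflection across $\Pi$ converts the homogeneous Neumann condition on $\gac$ into no condition at all while doubling the Dirichlet portion.

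Concretely, let $R\colon\rd\to\rd$ denote the orthogonal reflection across $\Pi$, set $\Omega':=R(\Omega)$ and $\tilde\Omega:=\mathrm{int}\bigl(\overline\Omega\cup\overline{\Omega'}\bigr)$. Given $u\in\dom(-\Delta_\ga+V)$, define $\tilde u$ on $\tilde\Omega$ by $\tilde u(x)=u(x)$ for $x\in\Omega$ and $\tilde u(x)=u(R(x))$ for $x\in\Omega'$. The matching of the $H^{1/2}$-traces on $\gac$ yields $\tilde u\in H^1(\tilde\Omega)$, and $u|_\ga=0$ together with its reflected analogue yields $\tilde u\in H^1_0(\tilde\Omega)$. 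To identify $-\Delta\tilde u$ distributionally I would test against $\varphi\in C_c^\infty(\tilde\Omega)$ and apply the Green identity \eqref{eq:green} separately on $\Omega$ and on $\Omega'$: the boundary contributions on $\gac$ from the two sides vanish because $\pnu u|_{\gac}=0$ implies the analogous Neumann condition for $u\circ R$ on $\gac$ (the reflection reverses both the outer normal and the sign of the gradient's normal component). Consequently $-\Delta\tilde u$ is the even reflection of $-\Delta u\in L^2(\Omega)$ and therefore lies in $L^2(\tilde\Omega)$, where one uses that $V$ is bounded so that $-\Delta u = (-\Delta u + Vu) - Vu \in L^2(\Omega)$.

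At this stage $\tilde u$ solves a pure Dirichlet problem on $\tilde\Omega$ with $L^2$ right-hand side, and it remains only to check that $\tilde\Omega$ falls into the class of domains for which the Grisvard-type $H^2$-regularity theorem applies, yielding $\tilde u\in H^2(\tilde\Omega)$ and hence $u=\tilde u|_\Omega\in H^2(\Omega)$. The reflection preserves Lipschitz regularity and piecewise smoothness. At a singularity of $\partial\Omega$ lying in the interior of $\ga$ the opening angle is $\le\pi$ by hypothesis and is preserved by $R$; at a singularity lying on $\overline\ga\cap\overline{\gac}$ the opening angle of $\Omega$ is $\le\pi/2$ by hypothesis, so the doubled configuration of $\partial\tilde\Omega$ at the corresponding point has opening angle $\le\pi$. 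Hence every interior corner (in dimension two) or dihedral angle (in higher dimensions) of $\partial\tilde\Omega$ is at most $\pi$.

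The main obstacle I anticipate is the geometric bookkeeping in dimension $d\ge 3$, where $\partial\tilde\Omega$ has a stratified structure of edges and corners of various codimensions obtained by reflecting analogous strata of $\partial\Omega$, and one must verify simultaneously that the dihedral angle along each stratum of $\tilde\Omega$ satisfies the $\le\pi$ bound and that the classical Dirichlet $H^2$-regularity result being invoked is genuinely available for the piecewise smooth Lipschitz class to which $\tilde\Omega$ belongs. Apart from this, the proof is a routine reflection argument.
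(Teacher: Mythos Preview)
Your proposal is correct and follows essentially the same reflection argument as the paper: double $\Omega$ across the hyperplane $\Pi\supset\gac$, observe that the even extension of $u$ solves a pure Dirichlet problem on $\tilde\Omega$ with $L^2$ right-hand side, check that all interior angles of $\tilde\Omega$ are $\le\pi$, and invoke the standard $H^2$-regularity for the Dirichlet Laplacian (the paper cites \cite[Proposition~4.8]{AGMT10}). Your write-up is in fact slightly more careful than the paper's, which carries out the argument only for eigenfunctions rather than arbitrary $u\in\dom(-\Delta_\Gamma+V)$, and which does not spell out the Green-identity computation showing $-\Delta\tilde u\in L^2(\tilde\Omega)$; the obstacle you flag concerning the higher-dimensional stratified geometry is real but is likewise glossed over in the paper.
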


\begin{proof}
Consider the domain $\tilde{\om}$ defined as the union of $\om$ and its reflection over the hyperplane of $\ga^c$, and the potential $\tilde{V}$ on $\tilde{\om}$ defined as the even reflection of $V$ over the same hyperplane; note that the assumptions on the angles of $\bnd$ imply that the angles at all interior corners of $\partial \tilde{\om}$ are less or equal $\pi$. Let now $u$ be any eigenfunction of $-\Delta_{\Gamma} + V$ on $\om$. Since $\tilde{\om}$ is obtained by reflecting over the subset of the boundary $\gac$ which is subject to a Neumann boundary condition, the even reflection $\tilde{u}$ of $u$ defined on $\tilde{\om}$ is an eigenfunction of the pure Dirichlet problem for the operator $-\Delta + \tilde{V}$ on the reflected domain $\tilde{\om}$. By \cite[Proposition 4.8]{AGMT10}, $\tilde{u} \in H^2(\tilde{\om})$ as the angles at all interior corners of $\partial \tilde{\om}$ are less or equal $\pi$; it then follows immediately that $u \in H^2(\om)$.
\end{proof}

\section{An inequality between the lowest mixed eigenvalues of Schr\"odinger operators on planar domains}
\label{sec:planar}

In this section we compare mixed Dirichlet-Neumann eigenvalues corresponding to different configurations of boundary conditions on planar domains. By doing so we are effectively extending to Schr\"odinger operators the work started in \cite{AR23}; in Section \ref{sec:higher dim} we will do the same on domains of dimension higher than two. We first note that the inequalities proved in \cite[Theorem 3.1 and Theorem 3.3]{AR23} extend to any Schr\"odinger operator with a constant potential $V_0$
\begin{equation*}
\lambda^{\ga'}_1(V_0) < \lambda^{\ga}_1(V_0)
\end{equation*}
as adding a constant simply shifts the eigenvalues by that constant. It follows immediately by a perturbation argument that the same inequality holds for potentials which are sufficiently close to constants, namely, if $V \in L^\infty(\om)$ is any real-valued potential then there exists $\tau_0 > 0$ such that 
\begin{equation*}
\lambda^{\ga'}_1(V_0 + \tau V) < \lambda^{\ga}_1(V_0 + \tau V)
\end{equation*}
holds for all $\tau \in \mathbb{R}$ with $|\tau|<\tau_0$. This is true without any additional assumption on $V$. However, these observations have limited relevance as they depend on the strength of the potential; in this section we establish inequalities which do not depend on the strength of $V$, but instead assume $V$ to be constant, or concave and monotonic, along a certain direction.

We start by comparing the lowest eigenvalues on a planar domain $\om$ corresponding to two different configurations: in one, a Dirichlet boundary condition is imposed on an open subset $\ga$ of the boundary and a Neumann boundary condition on its complement $\gac$; in the other, a Dirichlet boundary condition is imposed on an open straight line segment $\ga'$ which is disjoint with $\ga$, but such that the remaining part of the boundary $\bnd \setminus (\overline{\ga} \cup \overline{\ga'})$ has non-trivial interior, and a Neumann boundary condition on its complement $(\ga')^c$. To achieve this we impose the geometric assumption \eqref{eq:altmiraculous} below on $\bnd \setminus \Gamma$, as well as regulate the concavity and the behaviour of the potential $V$ in the unique direction perpendicular to $\ga'$ on various portions of the domain. 

Note that for planar domains $\om$ the boundary $\bnd$ being piecewise smooth implies that $\bnd$ consists of finitely many $C^\infty$-smooth arcs. For almost all $x \in \bnd$ there exists a well-defined outer unit normal vector $\nu(x)$ and a unit tangent vector $\tau(x)$ in the direction of the positive orientation of the boundary satisfying $\nu^\perp(x) = \tau(x)$ for almost all $x \in \bnd$; both vector fields $\tau$ and $\nu$ are piecewise smooth with a finite number of jump discontinuities corresponding to the corners, that is, the intersections between consecutive smooth arcs.

\begin{theorem}
\label{thm:schrodinger2 gaprime}
Let $\om \subset \R^2$ be a bounded, convex Lipschitz domain with piecewise smooth boundary. Let $\Gamma, \Gamma' \subset \partial \Omega$ be disjoint, relatively  open, non-empty sets such that $\Gamma'$ is a straight line segment and $\Gamma$ is connected. Assume that the interior angles of $\partial \Omega$ at both end points of $\Gamma$ are less than $\pi/2$, and that $V \in W^{2,\infty}(\om)$ is real-valued. Let $b$ denote the constant outer unit normal vector of $\Gamma'$, and assume that
\begin{equation}
\label{eq:altmiraculous}
\text{the function}~(\lambda_1^\ga(V) - V)(b \cdot \tau)(b \cdot \nu)~\text{is non-increasing along}~\partial \Omega \setminus \Gamma
\end{equation}
according to positive orientation. Assume in addition that 
\begin{itemize}
\item[(i)] either $b \cdot \nabla V =0$ identically on $\om$ ($V$ is constant along the direction orthogonal to $\ga'$);
\item[(ii)] or $V$ is concave and $(b \cdot \nabla V)(b \cdot \nu)|_{\bnd \setminus \overline{\ga} } \ge 0$.
\end{itemize}
Then 
\begin{equation*}
\lambda^{\ga'}_1(V) < \lambda^{\ga}_1(V).
\end{equation*}
\end{theorem}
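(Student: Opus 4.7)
My plan is to apply the variational characterization \eqref{eq:variational} for $\lambda_1^{\Gamma'}(V)$ to the directional derivative $v := b \cdot \nabla u$ of the positive first eigenfunction $u$ of $-\Delta_\Gamma + V$, setting $\lambda := \lambda_1^\Gamma(V)$. Convexity of $\Omega$ forces interior corners of $\partial\Omega$ to have opening $\le \pi$, and the hypothesis on the angles at the endpoints of $\Gamma$ allows me to invoke Proposition \ref{prop:regularity}, giving $u \in H^2(\Omega)$ and hence $v \in H^1(\Omega)$. Since $b$ coincides with the outer unit normal on $\Gamma'$ and $\Gamma' \subset \Gamma^c$ carries a Neumann condition, $v|_{\Gamma'} = \partial_\nu u|_{\Gamma'} = 0$ and so $v \in H^1_{0,\Gamma'}(\Omega)$; that $v \not\equiv 0$ follows from a standard unique continuation argument, since $\partial_b u \equiv 0$ would force $u$ to vanish on a non-trivial open subset of $\Omega$ via the condition $u|_\Gamma = 0$.

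The key identity arises from $-\Delta v = -\partial_b \Delta u = (\lambda - V) v - (\partial_b V) u$, which follows because $b$ is constant, together with the first Green identity \eqref{eq:green} applied to $v$: it reads
\begin{equation}
\label{eq:sketchid}
\int_\Omega \bigl(|\nabla v|^2 + V v^2 - \lambda v^2\bigr) dx = -\int_\Omega u (\partial_b u)(\partial_b V) dx + \int_{\partial\Omega}(\partial_b u)(\partial_\nu \partial_b u) d\sigma.
\end{equation}
By \eqref{eq:variational} the conclusion $\lambda_1^{\Gamma'}(V) \le \lambda$ reduces to showing the right-hand side of \eqref{eq:sketchid} is non-positive. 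The bulk term I rewrite as $-\tfrac12 \int_\Omega (\partial_b V) \partial_b(u^2) dx$, integrate by parts using $u|_\Gamma = 0$, and obtain $\tfrac12 \int_\Omega u^2 (\partial_b^2 V) dx - \tfrac12 \int_{\partial\Omega \setminus \overline\Gamma} u^2 (\partial_b V)(b \cdot \nu) d\sigma$, which vanishes under assumption (i) and is non-positive under (ii) by concavity of $V$ and the boundary sign condition on $\partial_b V$.

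The boundary term is the delicate one. I partition $\partial\Omega = \Gamma \cup \Gamma' \cup \Gamma_0$ with $\Gamma_0 := \partial\Omega \setminus(\overline\Gamma \cup \overline{\Gamma'})$; the $\Gamma'$ contribution vanishes since $\partial_b u = \partial_\nu u = 0$ there. On $\Gamma$, combining $u|_\Gamma = 0$ with the Frenet relations $\partial_\tau \nu = \kk \tau$, $\partial_\tau \tau = -\kk \nu$ gives $\partial_\tau^2 u = \kk \partial_\nu u$ pointwise, and the eigenvalue equation then yields $\partial_\nu^2 u = -\kk \partial_\nu u$. Analogously, on $\Gamma_0$ the Neumann condition $\partial_\nu u = 0$ gives $\partial_\tau \partial_\nu u = -\kk \partial_\tau u$ and $\partial_\nu^2 u = (V-\lambda) u - \partial_\tau^2 u$. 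Expanding $\partial_b$ and $\partial_\nu \partial_b$ in the local $(\tau,\nu)$-basis, integrating by parts along the one-dimensional boundary, and simplifying via $\partial_\tau[(b\cdot\nu)(b\cdot\tau)] = \kk[(b\cdot\tau)^2 - (b\cdot\nu)^2]$ turns the boundary integral into
\begin{equation*}
\int_{\partial\Omega}(\partial_b u)(\partial_\nu \partial_b u) d\sigma = -\frac12 \int_{\partial\Omega} \kk |\nabla u|^2 d\sigma + \frac12 \int_{\partial\Omega \setminus \Gamma} u^2 \partial_\tau\bigl[(\lambda - V)(b\cdot\tau)(b\cdot\nu)\bigr] d\sigma + \mathcal{E},
\end{equation*}
where $\mathcal{E}$ collects the two boundary-of-boundary contributions. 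These vanish at the endpoints of $\Gamma'$ because $b \cdot \tau = 0$ on $\Gamma'$, and at the endpoints of $\Gamma$ because the angle hypothesis forces $\nabla u$ to vanish at a mixed Dirichlet-Neumann corner of opening $< \pi/2$ (the leading singular exponent there exceeds $1$). Convexity of $\Omega$ makes the first integral non-positive, and hypothesis \eqref{eq:altmiraculous} does the same for the second, yielding $\lambda_1^{\Gamma'}(V) \le \lambda$.

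The strict inequality will come from Lemma \ref{lem:continuation principle}: if equality held, then $v$ would attain the minimum in \eqref{eq:variational} for $\lambda_1^{\Gamma'}(V)$ and hence be a sign-definite eigenfunction of $-\Delta_{\Gamma'}+V$ at $\lambda$; pairing $(-\Delta + V)v = \lambda v$ with the identity above forces each non-positive summand to vanish, from which, together with Hopf's lemma for $u$ on $\Gamma$, one extracts an open subset of $\Gamma$ on which both $u$ and $\partial_\nu u$ vanish, contradicting Lemma \ref{lem:continuation principle}. I expect the main obstacle of the proof to be the third paragraph: the boundary computation demands a careful interplay of the Frenet identities, of the two boundary-regularity relations available separately on $\Gamma$ and $\Gamma_0$, and of the integration-by-parts correction terms, organized so that the residual tangential-derivative integrand matches precisely the monotone quantity controlled by \eqref{eq:altmiraculous}.
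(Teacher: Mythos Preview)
Your overall strategy coincides with the paper's: take $v=b\cdot\nabla u$ as a trial function for $\lambda_1^{\Gamma'}(V)$ and show its Rayleigh quotient does not exceed $\lambda_1^\Gamma(V)$. The bulk term is handled exactly as in the paper. Two genuine gaps remain.

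\textbf{The corner contributions $\mathcal{E}$ do not vanish.} Your description of $\mathcal{E}$ as consisting only of endpoint terms at $\partial\Gamma$ and $\partial\Gamma'$, all of which vanish, misses that $\partial\Omega\setminus(\overline\Gamma\cup\overline{\Gamma'})$ can itself contain corners $P_j$, and each such corner produces a jump term of the form $\big[(\lambda-V)(b\cdot\tau)(b\cdot\nu)\big]^{\,+}_{\,-}(P_j)\,u^2(P_j)$ from the tangential integration by parts. Even at an endpoint $P$ of $\Gamma'$ adjacent to $\Gamma_0$, only the limit from the $\Gamma'$ side has $b\cdot\tau=0$; the limit from the $\Gamma_0$ side does not, so the jump is nonzero. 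These terms are not zero but have the correct sign precisely because hypothesis~\eqref{eq:altmiraculous} is a monotonicity statement that also governs the one-sided limits at corners (the paper makes this explicit before the proof). You invoke~\eqref{eq:altmiraculous} only for the smooth $\partial_\tau$ integrand, not for the jumps, and that is where it is needed most.

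\textbf{The strictness argument is incomplete.} Your plan---equality forces the curvature integral to vanish, hence $\nabla u=0$ on an open arc of $\Gamma$, contradiction via Lemma~\ref{lem:continuation principle}---works only when $\Gamma$ contains a genuinely curved arc. If $\Omega$ is a polygon and $\Gamma$ a single straight side, $\kappa\equiv 0$ on $\Gamma$ and this yields nothing; Hopf's lemma does not help, since you have no mechanism to force $\partial_\nu u=0$ on $\Gamma$. The paper treats several further cases separately: a curved arc inside $\Gamma^c$; $\Gamma$ straight but not parallel to $\Gamma'$ (where one shows $\partial_\nu u$ is constant along $\Gamma$ and uses $\nabla u=0$ at the mixed corner); and the most delicate, $\Gamma$ straight and parallel to $\Gamma'$, where the corner identities combined with the sign-definiteness of $v$ (as a putative first eigenfunction for $-\Delta_{\Gamma'}+V$) are needed. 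Your sketch does not cover these.
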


In order to facilitate the proof of Theorem \ref{thm:schrodinger2 gaprime} and interpret its assumptions we introduce some notation for $\bnd$. The domain $\Omega$ is convex with a piecewise smooth boundary and $\Gamma \subset \bnd$ is connected, therefore $\partial \Omega \setminus \Gamma$ consists, except for the corners, of finitely many relatively open smooth arcs $\Sigma_1, \dots, \Sigma_N$, which we can enumerate following the boundary in positive orientation from one end point of $\Gamma$ to the other. We denote the corner points of $\partial \Omega$, i.e.\ the points where two consecutive smooth pieces of $\partial \Omega$ meet, by $P_0, P_1, \dots, P_N$ so that $P_0$ and $P_N$ are the end points of $\Gamma$, and $P_{j-1}$ and $P_j$ are the end points of $\Sigma_j$, see Figure~\ref{fig:notationHelp}. 
\begin{figure}[h]
\begin{tikzpicture}[scale=0.4]
%[scale=0.5,label distance=-1mm]
\pgfsetlinewidth{0.8pt}
%Nodes
\node[circle,fill=black,inner sep=0pt,minimum size=2pt] (P1) at (7,6.5) {};
\node[circle,fill=black,inner sep=0pt,minimum size=2pt,label=below:{\small {$P_1$}}] (P2) at (5,6) {};
\node[circle,fill=black,inner sep=0pt,minimum size=2pt,label=right:{\small {$P_2$}}] (P3) at (3,4) {};
\node[circle,fill=black,inner sep=0pt,minimum size=2pt,label=right:{\small {$P_3$}}] (P4) at (3,0) {};
\node[circle,fill=black,inner sep=0pt,minimum size=2pt] (P5) at (4.22,-1.64) {};
\node[white] (G) at (5.26,-2.36) {}; 
\node[white] (H) at (6.32,-2.76) {}; 
\node[white] (J) at (7.5,-2.98) {}; 
\node[circle,fill=black,inner sep=0pt,minimum size=2pt,label=above left:{\small {$P_5$}}] (P6) at (8,-3) {};
%Lines and labels
\begin{pgfonlayer}{bg} 
\pgfsetlinewidth{0.8pt}
\path (P1) (P2) (P3);
%\draw[gray] (P1) to[bend right=45] (P2); 
\draw[gray] (P2) to[bend right=25] (P3); 
\node at (6,7) {$\Sigma_1$};
\draw[gray](P1.center) to (P2.center);
\node at (3.2,5.7) {$\Sigma_2$};
\node at (6.5,5.6) {\small $P_0$};
\node at (4.7,-1.2) {\small $P_4$};
\node at (9,2) {$\ga$};
\draw (P3.center) to node[black][left]{$\Gamma'\!=\!\Sigma_3$} (P4.center);
\draw[gray](P4.center) to node[black][below left]{$\Sigma_4$} (P5.center);
\draw[gray] (P5.center) to node[black][left]{} (P6.center);
\draw(6.32,-2.76) node[black][below]{$\Sigma_5$};
\path (P1) (P6);
\draw[black] (P1) to[bend left=10] (P6); 
\end{pgfonlayer}
%Label \Omega
\node at (barycentric cs:P1=1,P3=1,P4=1,P6=1) {$\Omega$};
\end{tikzpicture}
\caption{A convex domain with piecewise smooth boundary.}
\label{fig:notationHelp}
\end{figure}
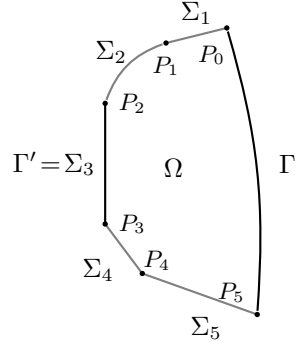
The function $(b \cdot \tau)(b \cdot \nu)$ on $\partial \Omega \setminus \Gamma$ is piecewise smooth with only a finite number of jump discontinuities corresponding to the corners, as the vector fields $\tau$ and $\nu$. At a corner $P_j$, where two smooth arcs $\Sigma_j$ and $\Sigma_{j+1}$ meet, the condition \eqref{eq:altmiraculous} must be read
\begin{align*}
 \lim_{\Sigma_{j+1} \ni x \to P_j} & ( \lambda_1^\ga(V)-V(x))(b \cdot \tau (x))(b \cdot \nu (x)) \le \\
 &\lim_{\Sigma_{j} \ni x \to P_j}( \lambda_1^\ga(V)-V(x))(b \cdot \tau (x))(b \cdot \nu (x)).
\end{align*}
By construction, on the straight arc $\Gamma'$,
\begin{equation*}
 (b \cdot \tau)(b \cdot \nu)|_{\Gamma'}=0
\end{equation*}
identically as $b$ is normal to $\Gamma'$. Therefore, condition \eqref{eq:altmiraculous} is automatically satisfied on $\ga'$. Note also that, since on $\ga'$ $\nu=b$ identically, on $\ga'$ condition (ii) reduces to $b \cdot \nabla V \ge 0$, while on $\bnd \setminus (\overline{\ga} \cup \overline{\ga'})$ the sign of $b \cdot \nu$ is not fixed. The following two examples demonstrate domains and potentials for which Theorem \ref{thm:schrodinger2 gaprime} holds; in the first one the potential satisfies condition (i), while in the second condition (ii).

\begin{example}
\label{example:trapezium}
Let $\om \subset \mathbb{R}^2$ be a trapezium, i.e. a quadrilateral with at least one pair of parallel sides, called bases, and assume that the two angles adjacent to its longer base are acute. We choose $\ga'$ to be the shorter base and $\ga$ the longer base, and denote the remaining sides by $\Sigma_1$ and $\Sigma_3$ following the notation introduced above, see Figure \ref{fig:trapezium triangle}. Assume without loss of generality that $\om$ is rotated such that $\ga$ and $\ga'$ are parallel to the $x_2$-axis and in particular $\ga' = \{ 0 \} \times [a,b]$. Then $b=(-1,0)^\top$, $(b \cdot \tau)(b \cdot \nu) = \tau_1 \tau_2$ and in particular $(b \cdot \tau)(b \cdot \nu)|_{\Sigma_1} > 0$, $(b \cdot \tau)(b \cdot \nu)|_{\Sigma_3} < 0$. Let $\phi \in C^\infty_c([a,b])$ be non-negative and define $V$ as follows;
\begin{equation*}
V(x_1,x_2)=
\begin{cases}
\phi(x_2), \quad (x_1,x_2) \in \om : x_2 \in [a,b]; \\
0, \,\,\,\,\quad \quad \mathrm{elsewhere \,\, in} \om.
\end{cases}
\end{equation*}
Then $V \in W^{2,\infty}(\om)$ and $b \cdot \nabla V = -\partial_1 V = 0$ on $\om$. Also, $V \ge 0$, from which it follows that $\lambda_1^\ga(V)>0$ by Lemma \ref{lemma:lowest ev strict}, and in particular $V|_{\Sigma_1 \cup \Sigma_3}=0$. The function $(\lambda_1^\ga(V) - V)(b \cdot \tau)(b \cdot \nu)$ from condition \eqref{eq:altmiraculous} then equals $\lambda_1^\ga(V) (b \cdot \tau)(b \cdot \nu)$ on $\Sigma_1$ and $\Sigma_3$ and vanishes identically on $\ga'$ as observed above. Conditions \eqref{eq:altmiraculous} and (i) of Theorem \ref{thm:schrodinger2 gaprime} are then verified for this choice of $\om$ and $V$, and $\lambda^{\ga'}_1(V) < \lambda^{\ga}_1(V)$ holds.
\end{example}

\begin{example}
Let $\om \subset \mathbb{R}^2$ be an obtuse triangle, i.e. a triangle with one angle bigger than $\pi/2$, and choose $\ga$ and $\ga'$ to be two of its sides such that the angle which they enclose is strictly less than $\pi/2$. We assume without loss of generality that the remaining side is situated between the end point of $\ga'$ and the starting point of $\ga$ according to positive orientation of the boundary; we name this side $\Sigma_1$ according to the notation introduced above, see Figure \ref{fig:trapezium triangle}. As in the previous example we assume without loss of generality that $\om$ is rotated such that $\ga'$ is parallel to the $x_2$-axis and in particular $\ga' = \{0\} \times [0,a]$, i.e. the end point shared by $\ga'$ and $\Sigma_1$ lies in the origin. Then $(b \cdot \tau)(b \cdot \nu) = \tau_1 \tau_2$, from which $(b \cdot \tau)(b \cdot \nu)|_{\Sigma_1} < 0$, and $(b \cdot \nu)|_{\Sigma_1} > 0$ as the angle between $b$ and $\nu|_{\Sigma_1}$ is smaller than $\pi/2$. Since $\nu|_{\Sigma_1}$ is a constant vector, we define the potential $V$ on $\om$ as 
\begin{equation*}
V(x) = a(x \cdot \nu|_{\Sigma_1}), \quad x \in \om,
\end{equation*}
where $a > 0$ is small enough such that $\lambda_1^\ga(V) > 0$; note that such $a$ exists by an argument similar to the one performed in the proof of Lemma \ref{lemma:lowest ev strict}. Then, $V \in W^{2,\infty}(\om)$ and $V$ is linear, hence concave. Further, $V$ vanishes identically on $\Sigma_1$ as any point $x \in \Sigma_1$ can be identified with a vector which is perpendicular to $ \nu|_{\Sigma_1}$, and $b \cdot \nabla V = b \cdot \nu|_{\Sigma_1}$ holds on $\om$. Condition \eqref{eq:altmiraculous} holds as $(\lambda_1^\ga(V)-V) (b \cdot \tau)(b \cdot \nu)$ vanishes identically on $\ga'$ and equals $\lambda_1^\ga(V)(b \cdot \tau)(b \cdot \nu) < 0$ on $\Sigma_1$; while condition (ii) is verified since $V$ is concave, $(b \cdot \nabla V)(b \cdot \nu)|_{\ga'}=(b \cdot \nu)|_{\Sigma_1} |b|^2> 0$, and $(b \cdot \nabla V)(b \cdot \nu)|_{\Sigma_1} = (b \cdot \nu)^2|_{\Sigma_1} > 0$. Conditions \eqref{eq:altmiraculous} and (ii) of Theorem \ref{thm:schrodinger2 gaprime} are then verified for this choice of $\om$ and $V$, and $\lambda^{\ga'}_1(V) < \lambda^{\ga}_1(V)$ holds.
\end{example}

 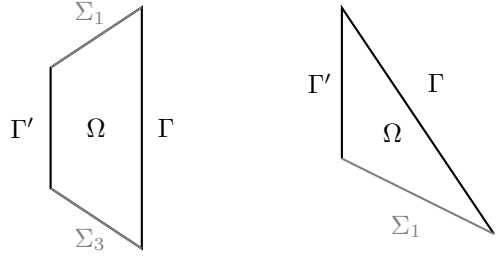
\begin{figure}[h]
\begin{minipage}[c][4cm][c]{0,3\textwidth}
\begin{tikzpicture}[scale=0.4]
\pgfsetlinewidth{0.8pt}
% Coordinates for the trapezium
\coordinate (A) at (0, 0);
\coordinate (B) at (0, 4);
\coordinate (C) at (3, 6);
\coordinate (D) at (3, -2);
% Draw the trapezium
\draw (A) -- (B) -- (C) -- (D) -- cycle;
%\draw[gray](B.center) to node[gray][above]{$\Sigma_1$} (C.center);
\draw[gray](B.center) to (C.center);
%\draw[gray](D.center) to node[gray][below]{$\Sigma_3$} (A.center);
\draw[gray](D.center) to (A.center);
% Labels for the vertices
\coordinate (M) at (-0.2, 2);
\coordinate (N) at (3.2, 2);
\node[left] at (M) {$\ga'$};
\node[right] at (N) {$\ga$};
\node at (barycentric cs:A=1,B=1,C=1,D=1) {$\Omega$};
\node[gray] at (1.3,5.8) {$\Sigma_1$};
\node[gray] at (1.3,-1.7) {$\Sigma_3$};
\end{tikzpicture}
\end{minipage}
\begin{minipage}[c][4cm][c]{0,3\textwidth}
\begin{tikzpicture}[scale=0.5]
\pgfsetlinewidth{0.8pt}
% Coordinates for the triangle
\coordinate (O) at (0, 0);
\coordinate (A) at (0, 4);
\coordinate (B) at (4,-2);
% Draw the triangle
\draw (O) -- (A) -- (B);
\draw[gray](O.center) to (B.center);
% Labels for the vertices
\coordinate (M) at (0,2);
\coordinate (N) at (2, 2);
\node[left] at (M) {$\ga'$};
\node[right] at (N) {$\ga$};
\node at (barycentric cs:O=1,A=1,B=1) {$\Omega$};
\node[gray] at (1.7,-1.8) {$\Sigma_1$};
\end{tikzpicture}
\end{minipage}
\caption{Theorem \ref{thm:schrodinger2 gaprime} applies to these polygons.}
\label{fig:trapezium triangle}
\end{figure}

We now prove Theorem \ref{thm:schrodinger2 gaprime} inspired by the proof of \cite[Theorem 3.3]{AR23}. We will make use of the following integration by parts result; we refer to \cite[Lemma 4.3]{AR23} for the proof. In order to do so we introduce the signed curvature of $\partial \Omega$ with respect to the outer unit normal $\nu$, defined at each point of $\partial \Omega$ except at the corners, see for instance \cite[Section 2.2]{PR10}. It can be expressed as 
\begin{equation*}
\kappa = \tau' \cdot \nu,
\end{equation*}
where $\tau$ is the unit tangent vector field along the boundary and the derivative $\tau'$ is to be understood piecewise via an arclength parametrization of the boundary in positive direction. On convex domains $\kappa(x) \le 0$ holds for almost all $x \in \partial \Omega$.

\begin{lemma}
\label{lem:Grisvard}
Assume that $\Omega \subset \mathbb{R}^2$ is a bounded Lipschitz domain with piecewise smooth boundary consisting, except for the corners, of finitely many smooth arcs $\Gamma_1, \dots, \Gamma_N$, and let $\kappa$ denote the curvature of $\partial \Omega$ w.r.t.\ the unit normal pointing outwards. Then 
\begin{align*}
\int_{\Omega} (\partial_1^2 u)(\partial_2^2 u)= \int_{\Omega} (\partial_1 \partial_2 u)^2 - \frac{1}{2} \int_{\partial \Omega} \kappa \abs{\nabla u}^2 \, \textup{d} \sigma 
\end{align*}
holds for all real-valued $u \in V^2 (\Omega)$, where
\begin{align*}
 V^2 (\Omega) = \Big\{ w \in H^2 (\Omega) : \text{on each}~\Sigma_j,~w |_{\Sigma_j} = 0~\text{or}~\partial_\nu w |_{\Sigma_j} = 0 \Big\},
\end{align*}
i.e.\ for all functions in $H^2 (\Omega)$ which satisfy a Dirichlet or Neumann boundary condition on each smooth arc.
\end{lemma}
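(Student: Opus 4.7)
The plan is to establish the identity by computing $\int_\Omega(\partial_1^2 u)(\partial_2^2 u)\,dx$ via two successive integrations by parts, in both possible orderings, and then identifying the resulting antisymmetric boundary integrand via a tangential–normal decomposition of $\nabla u$. For $u$ smooth up to $\partial\Omega$ (to be extended by density), carrying out the two derivatives in one order gives
\begin{equation*}
 \int_\Omega(\partial_1^2 u)(\partial_2^2 u)\,dx = \int_\Omega(\partial_1\partial_2 u)^2\,dx - \int_{\partial\Omega}(\partial_2 u)\,\partial_\tau(\partial_1 u)\,d\sigma,
\end{equation*}
after recognising the boundary integrand via $\partial_\tau=-\nu_2\partial_1+\nu_1\partial_2$ (with $\tau=\nu^\perp$); interchanging the order of the two integrations by parts yields the companion identity with $+\int_{\partial\Omega}(\partial_1 u)\,\partial_\tau(\partial_2 u)\,d\sigma$ in place of the boundary term. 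Averaging these two identical expressions brings the boundary term into the antisymmetric form
\begin{equation*}
 \int_\Omega(\partial_1^2 u)(\partial_2^2 u)\,dx = \int_\Omega(\partial_1\partial_2 u)^2\,dx + \tfrac12\int_{\partial\Omega}\bigl[(\partial_1 u)\partial_\tau(\partial_2 u)-(\partial_2 u)\partial_\tau(\partial_1 u)\bigr]\,d\sigma.
\end{equation*}

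To evaluate this antisymmetric integrand arc by arc, I would decompose $\nabla u$ on $\partial\Omega$ by writing $\partial_1 u = a\nu_1 - b\nu_2$ and $\partial_2 u = a\nu_2 + b\nu_1$ with $a:=\partial_\nu u$ and $b:=\partial_\tau u$, and compute $\partial_\tau(\partial_i u)$ by invoking the Frenet-type relations $\partial_\tau\nu_1 = \kappa\nu_2$ and $\partial_\tau\nu_2 = -\kappa\nu_1$, both consequences of $\partial_\tau\nu = -\kappa\tau$. Multiplying out and simplifying using $\nu_1^2+\nu_2^2=1$, all the $\nu_i\nu_j$ cross-terms collapse and one obtains the pointwise identity
\begin{equation*}
 (\partial_1 u)\partial_\tau(\partial_2 u)-(\partial_2 u)\partial_\tau(\partial_1 u) = -\kappa(a^2+b^2) + \bigl(a\,\partial_\tau b - b\,\partial_\tau a\bigr) = -\kappa|\nabla u|^2 + (a\partial_\tau b - b\partial_\tau a)
\end{equation*}
on each smooth arc $\Sigma_j$. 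The definition of $V^2(\Omega)$ is exactly what is needed to kill the parenthesised remainder: on any arc $\Sigma_j$ on which $u=0$ one has $b=\partial_\tau u \equiv 0$ on $\Sigma_j$ and hence $\partial_\tau b \equiv 0$ there; on any arc on which $\partial_\nu u = 0$ one has $a \equiv 0$ and hence $\partial_\tau a \equiv 0$. Either way $a\partial_\tau b - b\partial_\tau a \equiv 0$ on $\Sigma_j$, so summing over the finitely many smooth arcs
\begin{equation*}
 \tfrac12\int_{\partial\Omega}\bigl[(\partial_1 u)\partial_\tau(\partial_2 u)-(\partial_2 u)\partial_\tau(\partial_1 u)\bigr]\,d\sigma = -\tfrac12\int_{\partial\Omega}\kappa|\nabla u|^2\,d\sigma,
\end{equation*}
which combined with the averaged identity above gives the lemma.

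The most delicate point is the limited regularity at the corners of $\partial\Omega$: for a general $u\in V^2(\Omega)$ the boundary products $(\partial_i u)\,\partial_\tau(\partial_j u)$ are \emph{a priori} only dualities between $H^{1/2}(\partial\Omega)$ and $H^{-1/2}(\partial\Omega)$, and the tangent field $\tau$ jumps at each corner $P_j$. I would address this by first performing the computation above for functions $u$ smooth up to $\partial\Omega$ and satisfying the prescribed Dirichlet or Neumann condition on each $\Sigma_j$ — in which case every step is a classical pointwise identity and a routine use of the divergence theorem — and then passing to a general $u\in V^2(\Omega)$ by a standard density argument in the $H^2$-topology that preserves the arc-by-arc boundary conditions. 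The key simplification afforded by the symmetric (antisymmetric in the integrand) formulation is precisely that the vanishing of $a\partial_\tau b - b\partial_\tau a$ holds pointwise on each arc, so no integration by parts \emph{along} $\partial\Omega$ is needed and no spurious corner contributions are ever produced.
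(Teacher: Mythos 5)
Your computational core is correct and is essentially the classical route: two integrations by parts in the two possible orders, antisymmetrization of the boundary term, decomposition $\nabla u=\partial_\nu u\,\nu+\partial_\tau u\,\tau$ together with the Frenet-type relation $\partial_\tau\nu=-\kappa\tau$, and the observation that the remainder $a\,\partial_\tau b-b\,\partial_\tau a$ vanishes arcwise under either boundary condition. I checked the pointwise identity and the sign conventions against the paper's definition $\kappa=\tau'\cdot\nu$ (e.g.\ on the unit disk with $u=1-|x|^2$ both sides equal $4\pi$), and they are consistent. Note that the paper itself does not prove this lemma; it cites \cite[Lemma 4.3]{AR23}, whose proof ultimately rests on Grisvard's identities for piecewise smooth boundaries, i.e.\ on the same algebra you carry out but with the low-regularity issues handled at the source.

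The genuine gap is the closing step, which you correctly identify as the delicate point but then dispose of by fiat. The assertion that every $u\in V^2(\Omega)$ is an $H^2$-limit of functions smooth up to $\overline\Omega$ satisfying the same Dirichlet or Neumann condition on each arc is not a ``standard density argument'': on domains with corners, and especially at corners where the boundary condition changes type, $V^2(\Omega)$ contains functions with corner behaviour of the form $r^\alpha\sin(\alpha\theta)$ for non-integer $\alpha>1$, and whether such functions lie in the $H^2$-closure of the smooth ones with the same arcwise conditions is precisely the kind of question that occupies a substantial part of Grisvard's theory; it is not automatic, and for closely related spaces analogous identities acquire extra corner contributions when approximation fails. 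Since for $u$ merely in $H^2(\Omega)$ the intermediate boundary terms $\int_{\partial\Omega}(\partial_i u)\,\partial_\tau(\partial_j u)\,\mathrm{d}\sigma$ are only arcwise $H^{1/2}$--$H^{-1/2}$ pairings, the two integrations by parts themselves also need justification at this regularity. The standard repairs are either to invoke Grisvard's identity for $H^1$ vector fields (\cite[Theorem 3.1.1.1]{G85}, applied to $v=\nabla u$), which is proved at the required regularity and whose corner terms are then shown to vanish under the boundary conditions, or to approximate the domain (rounding the corners) rather than the function. As written, the proof is incomplete at exactly this point, though the identity and the rest of your argument are sound.
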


\begin{proof}[Proof of Theorem \ref{thm:schrodinger2 gaprime}.]
Let $\Gamma' \subset \partial \Omega$ be an open straight line segment and $\Gamma \subset \partial \Omega$ be relatively open and connected such that $\Gamma \cap \Gamma' = \emptyset$ and the interior angles of $\partial \Omega$ at both end points of $\Gamma$ are strictly less than $\pi/2$. We denote by $b$ the constant outer unit normal vector of $\ga'$. Let $u$ be a real-valued eigenfunction of $-\Delta_{\Gamma}+V$ corresponding to the eigenvalue $\lambda_1^{\Gamma}(V)$, and let $v = b \cdot \nabla u$, the directional derivative of $u$ in the direction of $b$. Proposition \ref{prop:regularity} yields that $v \in H^1(\Omega)$ as the angles adjacent to $\Gamma$ are strictly smaller than $\pi/2$ and $\Omega$ is convex. Moreover, the Neumann boundary condition which $u$ satisfies on $\ga'$ implies $v |_{\ga'} = 0$, i.e.\ $v \in H_{0, \ga'}^1 (\Omega)$. Note that $v$ is non-trivial since $b \cdot \nabla u=0$ identically on $\Omega$ together with $u=0$ on $\Gamma$ would imply $u=0$ on $\Omega$. This is due the fact that, due to the angle requirement and convexity of $\Omega$, $\Gamma$ is not a straight line segment orthogonal to $\Gamma'$.

Our first aim is to prove that 
\begin{align}
\label{eq:almost}
 \frac{\int_\Omega |\nabla v|^2 + V|v|^2 }{\int_\Omega |v|^2} \leq \lambda_1^\Gamma(V)
\end{align}
which combined with the variational principle \eqref{eq:minmax} yields $\lambda_1^{\ga'}(V) \le \lambda_1^\ga(V)$; we will prove strictness of the inequality separately. First, integration by parts yields
\begin{align}
\label{eq:half}
\begin{split}
\lambda_1^\Gamma(V) \int_\Omega |v|^2 & = \lambda_1^\Gamma(V)  \int_\Omega \nabla u \cdot b b^\top \nabla u  \\
& = \lambda_1^\Gamma(V)  \left( - \int_\Omega u \diver \left(b b^\top \nabla u \right) + \int_{\partial \Omega} u b b^\top \nabla u \cdot \nu \, \textup{d} \sigma \right) \\
& = \int_\Omega \Delta u \diver \left(b b^\top \nabla u \right) - \int_\Omega Vu \diver \left(b b^\top \nabla u \right) \\
& \phantom{\,=\,} + \lambda_1^\Gamma(V) \int_{\partial \Omega} u (b \cdot \nabla u)(b \cdot \nu) \, \textup{d} \sigma.
\end{split}
\end{align}
We rewrite the first domain integral on the right-hand side of \eqref{eq:half} using Lemma~\ref{lem:Grisvard},
\begin{align}
\label{eq:otherhalf}
\begin{split}
 \int_\Omega \Delta u \diver \left(b b^\top \nabla u \right) & = \int_\Omega (\Delta u ) \left( b_1^2 \partial_1^2 u + 2 b_1 b_2 \partial_1 \partial_2 u + b_2^2 \partial_2^2 u \right) \\
 & = \int_\Omega b_1^2 |\nabla \partial_1 u|^2 + 2 \int_\Omega b_1 b_2 \nabla \partial_1 u \cdot \nabla \partial_2 u \\
 & \quad + \int_\Omega b_2^2 |\nabla \partial_2 u|^2 - \frac{1}{2} \left(b_1^2 + b_2^2 \right) \int_{\partial \Omega} \kappa |\nabla u|^2 \, \textup{d} \sigma \\
 & = \int_\Omega |\nabla v|^2  - \frac{1}{2}  \int_{\partial \Omega} \kappa |\nabla u|^2 \, \textup{d} \sigma;
\end{split}
\end{align}
while the second domain integral on the right-hand side of \eqref{eq:half} may be integrated by parts,
\begin{align}
\label{eq:middlehalf}
\begin{split}
\int_\Omega Vu \diver \left(b b^\top \nabla u \right) & = - \int_{\om} \nabla(Vu) \cdot  b b^\top \nabla u + \int_\bnd Vu (b \cdot \nabla u)(b \cdot \nu) \,\mathrm{d}\sigma \\
& = - \int_{\om} V (\nabla u \cdot b b^\top \nabla u)  -  \int_{\om} u (\nabla V \cdot b b^\top \nabla u) \\
& \phantom{\,=\,} + \int_\bnd Vu (b \cdot \nabla u)(b \cdot \nu) \,\mathrm{d}\sigma \\
& = - \int_{\om} V |v|^2 - \int_{\om} u \left(b \cdot \nabla V \right) \left( b \cdot \nabla u \right) \\
& \phantom{\,=\,} + \int_\bnd Vu (b \cdot \nabla u)(b \cdot \nu) \,\mathrm{d}\sigma.
\end{split}
\end{align}
By plugging \eqref{eq:otherhalf} and \eqref{eq:middlehalf} into \eqref{eq:half} we obtain 
\begin{align}
\label{eq:fundeq0}
\begin{split}
\lambda_1^\Gamma(V) \int_\Omega |v|^2  & = \int_\Omega (|\nabla v|^2 + V |v|^2) - \frac{1}{2}  \int_{\partial \Omega} \kappa |\nabla u|^2 \, \textup{d} \sigma + \int_{\om} u \left(b \cdot \nabla V \right) \left( b \cdot \nabla u \right)  \\
& \phantom{\,=\,} + \lambda_1^\Gamma(V) \int_{\partial \Omega} u (b \cdot \nabla u)(b \cdot \nu) \, \textup{d} \sigma - \int_\bnd Vu (b \cdot \nabla u)(b \cdot \nu) \,\mathrm{d}\sigma.
\end{split}
\end{align}
The convexity of $\Omega$ implies $\kappa \leq 0$ almost everywhere on $\partial \Omega$, therefore we obtain
\begin{align}
\label{eq:fundineq0}
\begin{split}
\lambda_1^\Gamma(V) \int_\Omega |v|^2  & \ge  \int_\Omega (|\nabla v|^2 + V |v|^2) + \int_{\om} u \left(b \cdot \nabla V \right) \left( b \cdot \nabla u \right) \\
& \phantom{\,=\,} + \int_{\partial \Omega} ( \lambda_1^\Gamma(V) u - V u)(b \cdot \nabla u)(b \cdot \nu) \, \textup{d} \sigma
\end{split}
\end{align}
and inequality \eqref{eq:almost} follows if we can show that
\begin{align}
\label{eq:done}
\int_ {\om} u (b \cdot \nabla V)(b \cdot \nabla u) + \int_{\partial \Omega} (\lambda_1^\ga(V) u - Vu) (b \cdot \nabla u)(b \cdot \nu) \, \textup{d} \sigma \geq 0;
\end{align}
we prove that each summand is non-negative. 

We start with the domain integral. If condition (i) holds, $b \cdot \nabla V=0$ holds almost everywhere on $\om$ and the integral vanishes. Else, if condition (ii) holds, integration by parts yields
\begin{align}
\label{eq:step 3 bV comp}
\begin{split}
\int_{\om} u \left(b \cdot \nabla V \right) \left( b \cdot \nabla u \right) & = \int_{\om} u \nabla V \cdot b b^\top \nabla u = \frac{1}{2} \int_{\om} \nabla V \cdot b b^\top \nabla (u^2) \\
& = -\frac{1}{2} \int_{\om} \diver (b b^\top \nabla V) u^2 + \frac{1}{2} \int_{\bnd} \left( b b^\top \nabla V \cdot \nu \right) u^2 \, \mathrm{d}\sigma \\
& = -\frac{1}{2} \int_{\om} b^\top H_V b u^2 +\frac{1}{2} \int_{\bnd} (b \cdot \nabla V)(b \cdot \nu) u^2\, \mathrm{d}\sigma \\
& = -\frac{1}{2} \int_{\om} b^\top H_V b u^2 + \frac{1}{2} \int_{\bnd \setminus \overline{\ga}} (b \cdot \nabla V)(b \cdot \nu) u^2\, \mathrm{d}\sigma
\end{split}
\end{align}
where in the last step we used $u|_{\ga}=0$. The domain integral on the right-hand side is non-negative since $V$ is concave and thus the associated Hessian matrix $H_V$ is non-positive; while the integral over $\bnd \setminus \overline{\ga'}$ is also non-negative since $(b \cdot \nabla V)(b \cdot \nu) \ge 0$ on $\bnd \setminus \overline{\ga}$ by assumption. We have thus proved that 
\begin{equation*}
\int_{\om} u \left(b \cdot \nabla V \right) \left( b \cdot \nabla u \right) \ge 0.
\end{equation*}

We now focus on the boundary integral in \eqref{eq:done}. Note first that the integrand vanishes on $\overline{\ga} \cup \overline{\ga'}$ as $u |_\Gamma = 0$ and $b \cdot \nabla u|_{\ga'}=0$ constantly. To estimate the integral over the remainder of the boundary $\bnd \setminus (\overline{\ga} \cup \overline{\ga'})$ we will make use of the notation introduced above. Recall that $\partial \Omega \setminus \Gamma$ consists, except for the corners, of finitely many smooth arcs $\Sigma_1, \dots, \Sigma_N$ enumerated following the boundary in positive orientation from one end point of $\Gamma$ to the other, and that we denote the corner points adjacent to each $\Sigma_j$ by $P_j$ and $P_{j-1}$, see Figure \ref{fig:notationHelp}. Moreover, we define the functions
\begin{equation*}
%\label{eq:deftj}
 t_j  \coloneqq (b \cdot \tau)(b \cdot \nu)|_{\Sigma_j}, \quad j = 1, \dots, N;
\end{equation*}
note that for the index $j$ for which $\Sigma_j$ equals $\Gamma'$ we have $t_j = 0$ identically as $b$ is normal to $\Gamma'$, and that $t_j$ is constant if $\Sigma_j$ is a straight line segment.

At almost each point of $\partial \Omega \setminus \Gamma$ we can express the vector $b$ as a linear combination of $\tau$ and $\nu$,
\begin{equation*}
b = (b \cdot \tau) \tau + (b \cdot \nu) \nu
\end{equation*}
so that on $\partial \Omega \setminus \Gamma$, by the Neumann boundary condition imposed on $u$,
\begin{align*}
b \cdot \nabla u = (b \cdot \tau) \, \tau \cdot \nabla u + (b \cdot \nu) \, \nu \cdot \nabla u = (b \cdot \tau) \, \partial_\tau u + (b \cdot \nu) \, \partial_\nu u = (b \cdot \tau) \,\partial_\tau u
\end{align*}
holds. Inserting this expression into the integrand of the boundary integral in \eqref{eq:done} and integrating over any arc~$\Sigma_j$, $j = 1, \ldots, N$, we obtain
\begin{align}
\label{eq:firstpartbtaubnu}
\begin{split}
& \int_{\Sigma_j} (\lambda_1^\ga(V) u - Vu) (b \cdot \nabla u)(b \cdot \nu) \, \textup{d} \sigma \\
& = \int_{\Sigma_j} (b \cdot \tau) (b \cdot \nu)  (\lambda_1^\ga(V) u - Vu) \partial_\tau u \, \textup{d} \sigma \\
& = \lambda_1^\ga(V) \int_{\Sigma_j} t_j u \partial_\tau u \, \textup{d} \sigma - \int_{\Sigma_j} t_j V u \partial_\tau u \, \textup{d} \sigma \\
& = \frac{1}{2} \bigg( \lambda_1^\ga(V) \int_{\Sigma_j} t_j \partial_\tau (u^2) \, \textup{d} \sigma - \int_{\Sigma_j} V t_j \partial_\tau (u^2) \, \textup{d} \sigma \bigg).
\end{split}
\end{align}
where $\partial_\tau$ denotes the derivative in the direction of the tangential vector field $\tau$ of $\bnd$. By the fundamental theorem of calculus we get
\begin{align*}
\int_{\Sigma_j} t_j \partial_\tau (u^2) \, \textup{d} \sigma = - \int_{\Sigma_j} u^2 \partial_\tau t_j \, \textup{d} \sigma + t_j (P_j) u^2 (P_j) - t_j (P_{j-1}) u^2 (P_{j-1})
\end{align*}
and
\begin{align*}
\int_{\Sigma_j} t_j V \partial_\tau (u^2) \, \textup{d} \sigma & = \int_{\Sigma_j} (\partial_\tau (V t_j u^2) - u^2 \partial_\tau(V t_j)) \, \textup{d} \sigma \\
& = - \int_{\Sigma_j} u^2 \partial_\tau (V t_j) \, \textup{d} \sigma \\
& \phantom{\,\,\,=} + V(P_j) t_j (P_j) u^2 (P_j) - V(P_{j-1}) t_j (P_{j-1}) u^2 (P_{j-1}).
\end{align*}
By plugging these two identities into \eqref{eq:firstpartbtaubnu} and using $u = 0$ on $\Gamma$, in particular $u (P_0) = u (P_N) = 0$, we obtain
\begin{align}
\label{eq:decomposed}
\begin{split}
& \int_{\bnd} (\lambda_1^\ga(V) u - Vu) (b \cdot \nabla u)(b \cdot \nu) \, \textup{d} \sigma \\
& = \frac{1}{2} \sum_{j=1}^{N} \bigg( - \lambda_1^\ga(V) \int_{\Sigma_j} u^2 \partial_\tau t_j \, \textup{d} \sigma + \int_{\Sigma_j} u^2 \partial_\tau (V t_j) \, \textup{d} \sigma\\
& \quad +\lambda_1^\ga(V) t_j (P_j) u^2 (P_j) - \lambda_1^\ga(V) t_j (P_{j-1}) u^2 (P_{j-1}) \\
& \quad + V(P_{j-1}) t_j (P_{j-1}) u^2 (P_{j-1}) - V(P_j) t_j (P_j) u^2 (P_j) \bigg) \\
& = \frac{1}{2} \bigg( \sum_{j=1}^{N}  \int_{\Sigma_j} u^2 \left( - \lambda_1^\ga(V) \partial_\tau t_j + \partial_\tau (V t_j) \right) \, \textup{d} \sigma \\
& \quad + \sum_{j=1}^{N - 1} \lambda_1^\ga(V) [t_j - t_{j + 1}] (P_j) u^2 (P_j) \bigg) - \sum_{j=1}^{N - 1} [Vt_j - Vt_{j + 1}] (P_j) u^2 (P_j) \bigg) \\
& = \frac{1}{2} \bigg( \sum_{j=1}^{N}  \int_{\Sigma_j} u^2 \left( - \lambda_1^\ga(V) \partial_\tau t_j + \partial_\tau (V t_j) \right) \, \textup{d} \sigma \\
& \quad + \sum_{j=1}^{N - 1} (\lambda_1^\ga(V) [t_j - t_{j + 1}] - [V t_j - Vt_{j + 1}]) (P_j) u^2 (P_j) \bigg).
\end{split}
\end{align}
We can now conclude that this boundary integral is non-negative. Indeed, by the assumption \eqref{eq:altmiraculous} of the theorem we have
\begin{align*}
\lambda_1^\ga(V) \partial_\tau t_j - \partial_\tau (V t_j) \le 0 \quad \text{on}~\Sigma_j, \quad j = 1, \dots, N,
\end{align*}
and $[V t_j - Vt_{j + 1}] (P_j) - \lambda_1^\ga(V) [t_j - t_{j + 1}] (P_j) \leq 0$ at each corner $P_j$, $j = 1, \dots, N - 1$. This proves
\begin{align*}
\int_{\partial \Omega} (\lambda_1(V)u - Vu) (b \cdot \nabla u)(b \cdot \nu) \, \textup{d} \sigma \geq 0;
\end{align*}
thus \eqref{eq:done} follows and with \eqref{eq:fundineq0} this proves the inequality
\begin{equation*}
% \label{eq:nonstrictmain}
\lambda^{\Gamma'}_1(V) \le \lambda^{\Gamma}_1(V).
\end{equation*}

We now argue that this inequality is strict by distinguishing several cases based on the geometry of $\ga$. Assume for a contradiction that equality holds; then \eqref{eq:fundeq0} and the following computations imply that both
\begin{align}
\label{eq:curvatureZero}
 \int_{\partial \Omega} \kappa |\nabla u|^2 \, \textup{d} \sigma = 0,
\end{align}
and
\begin{align}
\label{eq:boundIntZeroV}
\begin{split}
& \sum_{j=1}^{N}  \int_{\Sigma_j} u^2 \left( - \lambda_1^\ga(V) \partial_\tau t_j + \partial_\tau (V t_j) \right) \, \textup{d} \sigma \\
& + \sum_{j=1}^{N - 1} (\lambda_1^\ga(V) [t_j - t_{j + 1}] - [V t_j - Vt_{j + 1}]) (P_j) u^2 (P_j) = 0
\end{split}
\end{align}
hold. 

\textit{Case 1.} In the first case, assume that $\Gamma$ is not a straight line segment, i.e., there exists a relatively open subset $\Lambda \subset \Gamma$ on which $\kappa$ is non-zero, and in particular uniformly negative as $\Omega$ is convex. Then \eqref{eq:curvatureZero} implies $\nabla u = 0$ almost everywhere on $\Lambda$. Thus, both $u$ and $\partial_\nu u$ vanish on $\Lambda$, and Lemma \ref{lem:continuation principle} implies $u = 0$ constantly in $\Omega$, a contradiction. 

\textit{Case 2.} Now, assume that there exists a relatively open subset $\Lambda \subset \bnd \setminus \overline{\ga}$ on which $\kappa$ is non-zero, and in particular $\Lambda \subset \bnd \setminus (\overline{\ga} \cup \overline{\ga'})$ as $\ga'$ is a straight line segment. Then we can argue the same way as in Case 1 to show that $\nabla u = 0$ holds almost everywhere on $\Lambda$. Note that if $\lambda_1^{\Gamma'}(V) = \lambda_1^\Gamma(V)$ holds, then \eqref{eq:almost} yields
\begin{align*}
\lambda_1^\Gamma(V) = \lambda_1^{\Gamma'}(V) = \frac{\int_\Omega ( |\nabla v|^2 + V |v|^2)}{\int_\Omega |v|^2},
\end{align*}
which in turn implies that $v = b \cdot \nabla u$ is an eigenfunction of $- \Delta_{\Gamma'}+V$ corresponding to $\lambda_1^{\Gamma'}(V)$. Thus, $\partial_\nu v=0$ on $\bnd \setminus \overline{\ga'}$ and in particular on $\Lambda$. On the other hand, $v = b \cdot \nabla u = 0$ on $\Lambda$ as $\nabla u = 0$ there; Lemma \ref{lem:continuation principle} then yields $v = 0$ constantly in $\Omega$, a contradiction since $v$ is an eigenfunction.

\textit{Case 3.} We are now in the case in which $\om$ is a polygon. Assume that $\Gamma$ is a straight line segment which is not parallel to $\Gamma'$ and denote by $P$ one of its end points; recall that the angle at $P$ is by assumption strictly less than $\pi/2$. At each point of $\ga$ we can express the vector $b$ normal to $\ga'$ as a linear combination of the unit vectors $\tau$ and $\nu$ which are tangential respectively normal to $\ga$,
\begin{equation}
\label{eq:bLinearComb}
 b = (b \cdot \tau) \, \tau + (b \cdot \nu) \, \nu,
\end{equation}
so that on $\ga$ we obtain
\begin{align}
\label{eq:reint2}
v = b \cdot \nabla u = (b \cdot \tau) \, \tau \cdot \nabla u + (b \cdot \nu) \, \nu \cdot \nabla u = (b \cdot \tau) \, \partial_\tau u + (b \cdot \nu) \, \partial_\nu u.
\end{align}
Since $u$ vanishes on $\ga$, we have $\partial_\tau \partial_\tau u = 0$ and thus 
\begin{equation*}
0 = - \lambda_1^\Gamma(V) u = \Delta u - Vu = \Delta u = \partial_\tau \partial_\tau u + \partial_\nu \partial_\nu u = \partial_\nu \partial_\nu u
\end{equation*}
on $\ga$, which combined with \eqref{eq:reint2} yields
\begin{align}
\label{eq:reint4}
\begin{split}
 \partial_\nu v & =  (b \cdot \tau) \, \partial_\nu \partial_\tau u + (b \cdot \nu) \, \partial_\nu \partial_\nu u =  (b \cdot \tau) \, \partial_\nu \partial_\tau u
\end{split}
\end{align}
on $\ga$. Now, for a contradiction, assume that $\lambda_1^{\Gamma'}(V) = \lambda_1^\Gamma(V)$ holds. Then, as above, \eqref{eq:almost} implies that $v = b \cdot \nabla u$ is an eigenfunction of $- \Delta_{\Gamma'}+V$ corresponding to $\lambda_1^{\Gamma'}(V)$. Then $\partial_\nu v =0$ on $(\Gamma')^c$ and in particular on $\ga$, so that the left-hand side of \eqref{eq:reint4} vanishes on $\ga$. As $\ga$ and $\Gamma'$ cannot be parallel, we have $b \cdot \tau \neq 0$ on $\ga$, and \eqref{eq:reint4} yields that $0 = \partial_\nu \partial_\tau u = \partial_\tau \partial_\nu u$ on $\Gamma$, i.e there exists a constant $c \in \mathbb{R}$ such that on $\Gamma$
\begin{equation}
\label{eq:normalDerConstant}
\partial_\nu u = c.
\end{equation}
Moreover, the corner $P$ is a critical point of $u$ as $u$ satisfies a Neumann boundary condition on $\Gamma^c$ and a Dirichlet boundary condition on $\Gamma$ and the two sides are not perpendicular. Thus $\nabla u(P) =  0$, which combined with \eqref{eq:normalDerConstant} gives $\partial_\nu u = 0$ on $\Gamma$. Since $u=0$ on $\Gamma$, Lemma~\ref{lem:continuation principle} yields $u = 0$ constantly in $\Omega$, a contradiction. 

\textit{Case 4.} Now, assume that $\om$ is a polygon and that $\Gamma$ is a line segment parallel to $\Gamma'$, see for instance Figure \ref{fig:polygon}; in particular $\Gamma$ and $\Gamma'$ are not adjacent to each other.
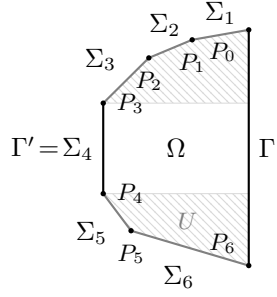
\begin{figure}[h]
\begin{tikzpicture}[scale=0.3]
%[scale=0.5,label distance=-1mm]
\pgfsetlinewidth{0.8pt}
%Nodes
\node[circle,fill=black,inner sep=0pt,minimum size=2pt,label=below left:{\small {$P_0$}}] (P0) at (9.38,7.24) {};
\node[circle,fill=black,inner sep=0pt,minimum size=2pt,label=below:{\small {$P_1$}}] (P1) at (6.9,6.8) {};
\node[circle,fill=black,inner sep=0pt,minimum size=2pt,label=below:{\small {$P_2$}}] (P2) at (5,6) {};
\node[circle,fill=black,inner sep=0pt,minimum size=2pt,label=right:{\small {$P_3$}}] (P3) at (3,4) {};
\node[circle,fill=black,inner sep=0pt,minimum size=2pt,label=right:{\small {$P_4$}}] (P4) at (3,0) {};
\node[circle,fill=black,inner sep=0pt,minimum size=2pt,label=below:{\small {$P_5$}}] (P5) at (4.22,-1.64) {};
\node[white] (U1) at (9.4,4) {}; 
\node[white] (U2) at (9.4,0) {}; 
\node[white] (G) at (5.26,-2.36) {}; 
\node[white] (H) at (6.32,-2.76) {}; 
\node[white] (J) at (7.5,-2.98) {}; 
\node[circle,fill=black,inner sep=0pt,minimum size=2pt,label=above left:{\small {$P_6$}}] (P6) at (9.38,-3.18) {};
%Lines and labels
\begin{pgfonlayer}{bg} 
\pgfsetlinewidth{0.8pt}
%Shading for set U of Case 5
\draw[white] (P0.center)--(P1.center)--(P2.center)--(P3.center)--(U1.center);
\draw[ultra thin,gray!30!white] (P3.center)--(U1.center);
\fill[pattern=north west lines, pattern color=gray!30!white] (P0.center)--(P1.center)--(P2.center)--(P3.center)--(U1.center)-- (P0.center);
\draw[white] (P6.center)--(P5.center)--(P4.center)--(U2.center)--(P6.center);
\draw[ultra thin,gray!30!white] (P4.center)--(U2.center);
\fill[pattern=north west lines, pattern color=gray!30!white] (P6.center)--(P5.center)--(P4.center)--(U2.center)--(P6.center);
%Actual drawing
\draw[gray](P0.center) to (P1.center);
\node at (8.14,8) {$\Sigma_1$};
\draw[gray](P1.center) to (P2.center);
\node at (5.7,7.45) {$\Sigma_2$};
%\draw[gray](P0.center) to node[black][above left]{$\Sigma_1$} (P1.center);
%\draw[gray](P1.center) to node[black][above left]{$\Sigma_2$} (P2.center);
\draw[gray](P2.center) to node[black][above left]{$\Sigma_3$} (P3.center);
\draw (P3.center) to node[black][left]{$\Gamma'\!=\!\Sigma_4$} (P4.center);
\draw[gray](P4.center) to node[black][below left]{$\Sigma_5$} (P5.center);
\draw[gray] (P5.center) to node[black][left]{} (P6.center);
%\draw[gray] plot[smooth] coordinates {(P5) (G) (H) (J) (P6)};
\draw(6.32,-2.76) node[black][below]{$\Sigma_6$};
\draw (P6.center) to node[black][right]{$\Gamma$} (P0.center);
\end{pgfonlayer}
%Label \Omega
\node at (barycentric cs:P0=1,P3=1,P4=1,P6=1) {$\Omega$};
\node[gray] at (6.7,-1.2) {\small $U$};
\end{tikzpicture}
\caption{A polygonal domain where $\ga'$ is parallel to $\ga$; the striped regions make up the set $U$ defined below.}
\label{fig:polygon}
\end{figure}
Assume in addition that $V(P_j) \neq \lambda_1^\ga(V)$ for some $j = 1,\ldots,N-1$. In particular, assume that $V(P_k) \neq \lambda_1^\ga(V)$ for a fixed $k \in \{1,\ldots,N-1\}$ such that $P_k$ is the end point of $\Gamma'$ ($P_4$ in Figure \ref{fig:polygon}). Then \eqref{eq:boundIntZeroV} holds, and since by the assumption \eqref{eq:altmiraculous} the function $(V-\lambda_1^\ga(V))(b \cdot \tau)(b \cdot \nu)$ is non-decreasing it holds this implies that each summand is equal to zero separately; in particular we have
\begin{align}
\label{eq:contr thirdcase}
[\lambda_1^\ga(V)(t_j - t_{j + 1})- (Vt_j - Vt_{j + 1})] (P_j) u^2 (P_j) = 0, \quad j = 1, \dots, N - 1;
\end{align}
note that $t_k=0$ identically as $\Sigma_k = \ga'$. Due to the convexity of $\Omega$ and the requirements on the angles adjacent to $\Gamma$ in the theorem, the interior angle of $\partial \Omega$ at $P_k$ must be strictly larger than $\pi/2$, and, in particular, $t_k (P_k) - t_{k + 1} (P_k) = - t_{k + 1} (P_k) \neq 0$. Thus, \eqref{eq:contr thirdcase} only holds if $u(P_k)=0$ since we are off the case $V(P_k) = \lambda_1^\ga(V)$. Note that, as argued above, $\lambda_1^{\Gamma'}(V) = \lambda_1^\Gamma(V)$ and \eqref{eq:almost} yield that $v = b \cdot \nabla u$ is an eigenfunction of $- \Delta_{\Gamma'}+V$ corresponding to $\lambda_1^{\Gamma'}(V)$. Its sign can then be chosen to be strictly positive or negative inside $\om$, which implies that $u$ must be either strictly increasing or strictly decreasing inside $\Omega$ in the direction of the vector $b$. However, the straight line parallel to $b$ through $P_k$ intersects $\partial \Omega$, except for $P_k$, at a point in $\Gamma$, where $u$ satisfies a Dirichlet boundary condition. But the function $u$ vanishes at both its intersection points with $\partial \Omega$, a contradiction. Since no segment $\Sigma_j$ is parallel or perpendicular to $\Gamma'$ due to the assumptions on convexity and angles, we can repeat the same argument at each point $P_j$ where $V(P_j) \neq \lambda_1^\ga(V)$ to derive a contradiction.

\textit{Case 5.} In this final case we assume that $\om$ is a polygon, $\Gamma$ is a line segment parallel to $\Gamma'$, and $V(P_1) = \ldots = V(P_{N-1}) = \lambda_1^\ga(V)$ holds; we again refer the reader to Figure \ref{fig:polygon}. Note that since $\om$ is a polygon $\partial_\tau t_j=0$ holds on each $\Sigma_j$, $j=1,\ldots, N$, and \eqref{eq:boundIntZeroV} yields
\begin{equation}
\label{eq:case5 integral}
\int_{\Sigma_j} u^2 t_j \partial_\tau V \, \textup{d} \sigma = 0, \quad j=1,\ldots, N.
\end{equation}
Let now $j \in \{1, \ldots, N\}$ such that $\Sigma_j$ does not coincide with $\Gamma'$; note that on $\Gamma'$ $t_j=0$ holds and \eqref{eq:case5 integral} is satisfied. By the reasoning performed in Case 4 we know that $u$ cannot vanish anywhere on $\Sigma_j$, so by \eqref{eq:case5 integral} $t_j \partial_\tau V = 0$ holds on $\Sigma_j$. However, $t_j \neq 0$ on $\Sigma_j$ since none of the segments $\Sigma_j$ is parallel or perpendicular to $\Gamma'$; by \eqref{eq:case5 integral} $\partial_\tau V = 0$ must then hold on $\Sigma_j$, i.e. $V$ is constant on $\Sigma_j$. Since $V(P_1) = \ldots = V(P_{N-1}) = \lambda_1^\ga(V)$, this yields that
\begin{equation}
\label{eq:case5 constant}
V= \lambda_1^\ga(V) \,\,\, \mathrm{constantly} \,\, \mathrm{on} \,\, \bnd \setminus (\overline{\ga} \cup \overline{\ga'}).
\end{equation}
We now distinguish two cases based on conditions (i) and (ii) of the statement. If condition (ii) holds, $V$ is concave and \eqref{eq:case5 constant} implies that $V(x) \ge  \lambda_1^\ga(V)$ for all $x \in \om$. This means that $\lambda_1^\ga(V) = \inf_{x \in \om} V(x)$, in contradiction with Lemma \ref{lemma:lowest ev strict}. Else, condition (i) yields that $b \cdot \nabla V =0$ on $\om$, which combined with \eqref{eq:case5 constant} implies that $V=\lambda_1^\ga(V)$ holds everywhere on the subset $U$ of $\om$ which is obtained by moving opposite $b$ starting from each point of $\bnd \setminus (\overline{\ga} \cup \overline{\ga'})$, see Figure \ref{fig:polygon}. As above, $v = b \cdot \nabla u$ is an eigenfunction of $- \Delta_{\Gamma'}+V$ corresponding to $\lambda_1^{\Gamma'}(V)$, i.e. $-\Delta v + V v = \lambda_1^\ga(V) v$ holds, and in particular on $U$ 
\begin{equation*}
-\Delta v = -V v + \lambda_1^\ga(V) v = 0.
\end{equation*}
By the same reasoning, $-\Delta u =0$ on $U$. By the Neumann boundary condition, $\partial_\nu u = 0$ and thus $\partial_\tau \partial_\nu u = 0$ holds on each $\Sigma_j$; analogously, $\partial_\nu v = \partial_\nu (b \cdot \nabla u) = \partial_b \partial_\nu u = 0$ holds on each $\Sigma_j$. Therefore, since on each $\Sigma_j$ $\tau$ and $b$ are neither parallel nor perpendicular, we can conclude $\partial_\nu \partial_\nu u = 0$ which combined with $-\Delta u =0$ implies $\partial_\tau \partial_\tau u = 0$ on each $\Sigma_j$. Thus, $\partial_\tau u=c$ on each $\Sigma_j$. In particular, $\nabla u=0$ holds at both end points of $\ga$ by the same reasoning performed in Case 3 and thus $\partial_\tau u= \tau \cdot \nabla u = 0$ on each $\Sigma_j$ sharing an end point with $\ga$ ($\Sigma_1$ and $\Sigma_6$ in Figure \ref{fig:polygon}). This means that $u$ is constant on such segments $\Sigma_j$; however, by the Dirichlet boundary condition on $\ga$, $u$ vanishes at the end points which the segments $\Sigma_j$ share with $\ga$ ($P_0$ and $P_6$ in Figure \ref{fig:polygon}) and thus $u=0$ identically on these $\Sigma_j$. As $\Sigma_j \subset \gac$ for all $j=1, \ldots, N$, this yields a contradiction with the Neumann boundary condition $\partial_\nu u = 0$ on $\gac$ by Lemma \ref{lem:continuation principle}. This concludes the proof of Theorem~\ref{thm:schrodinger2 gaprime}.
\end{proof}

\begin{remark}
(i) For a constant potential $V_0$, $\lambda_1^\ga(V_0) = \lambda_1^\ga + V_0$ where $\lambda_1^\ga$ is the lowest eigenvalue of the Laplacian subject to the same boundary conditions. Then since $\lambda_1^\ga > 0$ condition \eqref{eq:altmiraculous} of Theorem \ref{thm:schrodinger2 gaprime} rewrites as 
\begin{equation*}
\text{the function}~(b \cdot \tau)(b \cdot \nu)~\text{is non-increasing along}~\partial \Omega \setminus \Gamma
\end{equation*}
as in \cite[Theorem 3.3]{AR23}, and conditions (i) and (ii) are automatically satisfied as $\nabla V = 0$. This is in accordance with the above observation that inequalities for eigenvalues of the Laplacian are trivially satisfied when adding a constant potential $V_0$. 
\\
(ii) By Proposition \ref{prop:regularity}, the requirements of Theorem \ref{thm:schrodinger2 gaprime} on the angles of $\bnd$ adjacent to $\ga$ could be relaxed to allow one or both angles to equal $\pi/2$. Indeed, the proof of the non-strict eigenvalue inequality holds as long as $\ga$ and $\ga'$ do not enclose an angle equal to $\pi/2$. However, we wish to point out that, even if we are off this case, the arguments for strict inequality do not hold if $\ga$ is a straight line segment; this can be quickly verified by choosing $\ga$ and $\ga'$ to be opposite sides of any rectangle as by symmetry $\lambda_1^\ga = \lambda_1^{\ga'}$ holds for the eigenvalues of the Laplacian.
\end{remark}

If $\ga$ and $\ga'$ exhaust the whole boundary, i.e. $\bnd \setminus (\overline{\ga} \cup \overline{\ga'}) = \emptyset$, Theorem \ref{thm:schrodinger2 gaprime} simplifies to the following result comparing the lowest eigenvalues of two configurations that are dual to each other, that is, Dirichlet and Neumann boundary conditions are interchanged from one another. 

\begin{corollary}
\label{cor:schrodinger2 gac}
Let $\om \subset \R^2$ be a bounded, convex Lipschitz domain with piecewise smooth boundary. Let $\ga, \ga' \subset \bnd$ be disjoint, relatively open, non-empty sets such that $\ga'$ is a straight line segment and $\overline{\ga} \cup \overline{\ga'}=\bnd$. We denote by $b$ the constant outer unit normal vector of $\ga'$. Assume that the interior angles of $\bnd$ at both end points of $\ga$ are strictly less than $\pi/2$. Assume that $V \in W^{2,\infty}(\om)$ is real-valued and 
\begin{itemize}
\item[(i)] either $b \cdot \nabla V =0$ ($V$ is constant along the direction orthogonal to $\ga'$);
\item[(ii)] or $V$ is concave and $b \cdot \nabla V|_{\ga'} \ge 0$.
\end{itemize}
Then
\begin{equation*}
\lambda^{\ga'}_1(V) < \lambda^{\ga}_1(V).
\end{equation*}
\end{corollary}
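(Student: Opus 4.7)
The plan is to derive Corollary \ref{cor:schrodinger2 gac} as an immediate consequence of Theorem \ref{thm:schrodinger2 gaprime}, by verifying that in the simpler geometric setting $\overline{\ga}\cup\overline{\ga'}=\bnd$ all hypotheses of that theorem are satisfied. Thus no new variational argument is needed.

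First I would check the connectedness of $\Gamma$ required in Theorem \ref{thm:schrodinger2 gaprime}. Since $\om \subset \R^2$ is bounded and convex, $\bnd$ is a single Jordan curve; removing the open connected arc $\ga'$ leaves the open connected arc $\bnd\setminus\overline{\ga'}$. The conditions $\ga\cap\ga'=\emptyset$ and $\overline{\ga}\cup\overline{\ga'}=\bnd$ force $\ga=\bnd\setminus\overline{\ga'}$ (up to inclusion of the two endpoints of $\ga'$, which are a null set on the boundary and do not affect the eigenvalue), so in particular $\ga$ is connected.

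Next I would verify the monotonicity hypothesis \eqref{eq:altmiraculous}. Since $\overline{\ga}\cup\overline{\ga'}=\bnd$, the set $\bnd\setminus\ga$ is contained in $\overline{\ga'}$. On $\ga'$ the outward unit normal $\nu$ coincides with the constant vector $b$, hence $b\cdot\tau=0$ identically on $\ga'$. Consequently
\begin{equation*}
(\lambda_1^\ga(V)-V)(b\cdot\tau)(b\cdot\nu)\equiv 0 \quad\text{on}\quad \bnd\setminus\ga,
\end{equation*}
which is trivially non-increasing along $\bnd\setminus\ga$. The requirement on the interior angles at the end points of $\ga$ is identical in both statements, so it carries over verbatim.

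Finally, condition (i) of the corollary is identical to condition (i) of the theorem. For condition (ii), the corollary assumes $V$ concave and $b\cdot\nabla V|_{\ga'}\ge 0$. Since $\bnd\setminus\overline{\ga}\subset\overline{\ga'}$ and $\nu=b$ on $\ga'$, we have
\begin{equation*}
(b\cdot\nabla V)(b\cdot\nu)\big|_{\bnd\setminus\overline{\ga}}=(b\cdot\nabla V)|b|^2\big|_{\ga'}=b\cdot\nabla V\big|_{\ga'}\ge 0,
\end{equation*}
which is exactly the sign condition demanded by Theorem \ref{thm:schrodinger2 gaprime}(ii). All hypotheses being verified, the conclusion $\lambda_1^{\ga'}(V)<\lambda_1^\ga(V)$ follows from Theorem \ref{thm:schrodinger2 gaprime}. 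Since this is a purely mechanical specialization, I do not expect any substantial obstacle; the only mild subtlety is the bookkeeping that $\ga$ is connected in the reduced setting, which is handled above.
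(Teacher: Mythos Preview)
Your proposal is correct and follows essentially the same route as the paper: both simply verify that when $\overline{\ga}\cup\overline{\ga'}=\bnd$ the set $\bnd\setminus\ga$ reduces to $\overline{\ga'}$, so that condition~\eqref{eq:altmiraculous} is trivially satisfied since $(b\cdot\tau)(b\cdot\nu)$ vanishes on $\ga'$, and condition~(ii) reduces to $b\cdot\nabla V|_{\ga'}\ge 0$ because $b\cdot\nu=|b|^2$ there, after which Theorem~\ref{thm:schrodinger2 gaprime} applies directly. Your explicit verification that $\ga$ is connected is a small addition the paper leaves implicit.
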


\begin{proof}
If $\ga, \ga' \subset \bnd$ are such that $\overline{\ga} \cup \overline{\ga'}=\bnd$, then $\bnd \setminus \ga = \overline{\ga'}$. Condition \eqref{eq:altmiraculous} of Theorem \ref{thm:schrodinger2 gaprime} is then automatically satisfied on $\ga'$ as $(b \cdot \tau)(b \cdot \nu)$ vanishes on $\ga'$, while condition (ii) reduces to $b \cdot \nabla V =0$ on $\ga'$ as $b \cdot \nu = \abs{b}^2 > 0$ on $\ga'$. The statement then follows immediately from Theorem \ref{thm:schrodinger2 gaprime}.
\end{proof}

Two examples of domains to which Corollary \ref{cor:schrodinger2 gac} applies are shown in Figure \ref{fig:two domains}; since the angles adjacent to the longest side of any triangle are always smaller than $\pi/2$, Corollary \ref{cor:schrodinger2 gac} applies in particular to any triangle if we choose $\ga'$ to be its longest side and $\ga$ the union of the two remaining sides.
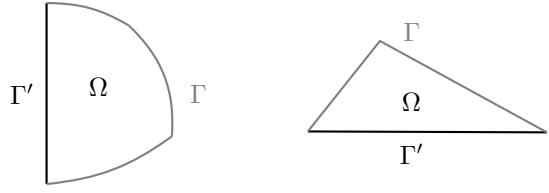
\begin{figure}[h]
\begin{minipage}[c][4cm][c]{0,3\textwidth}
\begin{tikzpicture}[scale=0.4]
%[scale=0.5,label distance=-1mm]
\pgfsetlinewidth{0.8pt}
%Nodes
\node (A) at (1,1) {};
\node (B) at (1,7) {};
\node (C) at (3.7, 6.26) {};
\node (D) at (5.12, 2.58) {};
\draw (A.center) to (B.center);
\path (1,1) coordinate (A) (1,7) coordinate (B) (3.7, 6.26) coordinate (C) (5.12, 2.58) coordinate (D);
\draw[gray]  (B) to[bend left=15] (C); 
\draw[gray] (C) to[bend left=25] (D); 
\draw[gray]  (D) to[bend left=15] (A); 
\node at (0.2,4) {$\ga'$};
\node[gray] at (6,4) {$\ga$};
\node at (barycentric cs:A=1,B=1,C=1,D=1) {$\Omega$};
\end{tikzpicture}
\end{minipage}
\begin{minipage}[c][4cm][c]{0,3\textwidth}
\begin{tikzpicture}[scale=0.6]
%[scale=0.5,label distance=-1mm]
\pgfsetlinewidth{0.8pt}
%Nodes
\node (G) at (1.42,3) {};
\node (H) at (6.66,2.98) {};
\node (I) at (3.,5) {};
\draw (G.center) to (H.center);
\draw[gray] (H.center) to (I.center) to (G.center);
\node  at (3.7, 2.5) {$\ga'$};
\node[gray]  at (3.7, 5.2) {$\ga$};
\node at (barycentric cs:G=1,H=1,I=1) {$\Omega$};
\end{tikzpicture}
\end{minipage}
\caption{Two domains to which Corollary \ref{cor:schrodinger2 gac} applies.}
\label{fig:two domains}
\end{figure}

We now collect several examples of potentials satisfying the assumptions of Corollary \ref{cor:schrodinger2 gac}; in all examples we assume $\om \subset \mathbb{R}^2$ to be a domain satisfying the assumptions of Corollary \ref{cor:schrodinger2 gac} and denote by $b=(b_1,b_2)^\top$ the normal vector to $\ga'$.

\begin{example}
\label{example:first}
Consider the potential
\begin{equation*}
V(x) = \alpha e^{\beta (x_1 +x_2)}, \quad x=(x_1, x_2)^\top \in \om,
\end{equation*}
for $\alpha>0$ and $\beta \in \mathbb{R} \setminus \{ 0 \}$. Then all partial derivatives of $V$ of order $n$ equal $\beta^n V$; it follows immediately that $V \in W^{2,\infty}(\om)$ and that $V$ is concave as $\alpha > 0$. Further, we have $b \cdot \nabla V = \beta V (b_1 + b_2)$ and condition (i) of Corollary \ref{cor:schrodinger2 gac} is satisfied if $b_1 = - b_2$, while condition (ii) holds if $\beta$ and $b_1+b_2$ have the same sign.
\end{example}

\begin{example}
Let $m = (m_1, m_2)^\top \in \mathbb{R}^2$, $q \in \mathbb{R}$ and 
\begin{equation*}
V(x) = m\cdot x + q = \binom{m_1}{m_2} \cdot \binom{x_1}{x_2} + q, \quad x=(x_1, x_2)^\top  \in \om.
\end{equation*}
Then $V \in W^{2,\infty}(\om)$, $V$ is both convex and concave, and $b \cdot \nabla V = b \cdot m \geq 0$, condition (ii) of Corollary \ref{cor:schrodinger2 gac}, holds if the vectors $b$ and $m$ form an angle smaller or equal $\pi/2$.
\end{example}

\begin{example}
Let
\begin{equation*}
V(x) = g(x_2), \quad x=(x_1, x_2)^\top  \in \om,
\end{equation*}
where $g \in W^{2,\infty}(\om)$ is real-valued and such that $g'$ is not identically zero. Then $b \cdot \nabla V = -\partial_{x_1} V = 0$ and condition (i) of Corollary \ref{cor:schrodinger2 gac} is satisfied.
\end{example}

\begin{example}
\label{example:last}
Consider the potential
\begin{equation*}
V(x) = \dist (x, \ga), \quad x \in \om.
\end{equation*}
Then $V \in W^{2,\infty}(\om)$ and $b \cdot \nabla V \ge 0$ as $b$ points in the direction opposite to $\ga$ and the distance from $\ga$ increases proceeding in the direction of $b$. To see that $V$ is concave, we consider the domain $\tilde{\om}$ obtained by reflecting $\om$ over the straight line segment $\ga'$ and define $\tilde{V}(x) = \dist (x, \bnd)$ for all $x \in \tilde{\om}$; note that $\tilde{\om}$ is convex as $\om$ is convex and the angles adjacent to $\ga'$ are smaller than $\pi/2$. Then $\tilde{V}$ is concave on $\tilde{\om}$ and so is its restriction to $\om$, which is precisely $V$. Then $V$ satisfies condition (ii) of Corollary \ref{cor:schrodinger2 gac}.
\end{example}

\section{A higher-dimensional inequality between the lowest mixed eigenvalues of Schr\"odinger operators}
\label{sec:higher dim}

In this section we extend Corollary \ref{cor:schrodinger2 gac} to dimensions higher than two, that is, we compare the lowest eigenvalues of the Schr\"odinger operator corresponding to two configurations where Dirichlet and Neumann boundary conditions are interchanged on domains with dimension $d \ge 3$. By doing so we are effectively comparing the lowest eigenvalues of the Laplacian corresponding to the same configurations.

\begin{theorem}
\label{thm:higherdim}
Let $\om \subseteq \rd$, $d \ge 3$, be a bounded, convex Lipschitz domain with piecewise smooth boundary. Let $\ga, \ga' \subset \bnd$ be disjoint, relatively open, non-empty sets such that $\ga'$ is a subset of a $d$-dimensional hyperplane and $\overline{\ga} \cup \overline{\ga'}=\bnd$. We denote by $b$ the constant outer unit normal vector of $\ga'$. Assume that the interior angles of $\bnd$ adjacent to $\ga$ are less or equal $\pi/2$. Then 
\begin{equation}
\label{eq:laplacian higherdim ineq}
\lambda_1^{\ga'} \le \lambda_1^\ga.
\end{equation}
Assume in addition that $V \in W^{2,\infty}(\om)$ is real-valued and 
\begin{itemize}
\item[\rm{(i)}] either $b \cdot \nabla V =0$ ($V$ is constant along the direction orthogonal to $\ga'$);
\item[\rm{(ii)}] or $V$ is concave and $b \cdot \nabla V|_{\ga'} \ge 0$.
\end{itemize}
Then
\begin{equation}
\label{eq:schrodinger higherdim ineq}
\lambda_1^{\ga'}(V) \le \lambda_1^\ga(V).
\end{equation}
\end{theorem}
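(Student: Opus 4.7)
The strategy will be to adapt the variational argument of Theorem \ref{thm:schrodinger2 gaprime}, or rather its clean consequence Corollary \ref{cor:schrodinger2 gac}, to $d \ge 3$. Let $u$ be a real-valued eigenfunction of $-\Delta_{\Gamma} + V$ corresponding to $\lambda_1^{\Gamma}(V)$, and set $v := b\cdot\nabla u$, the directional derivative of $u$ in the direction of the constant outer unit normal $b$ to $\Gamma'$. Since $\Gamma^c = \Gamma'$ lies in a hyperplane and the interior angles adjacent to $\Gamma$ are at most $\pi/2$, Proposition \ref{prop:regularity reflection} yields $u \in H^2(\Omega)$ and hence $v \in H^1(\Omega)$. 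Because $\nu = b$ on $\Gamma'$, the Neumann condition $\partial_\nu u|_{\Gamma'} = 0$ reads precisely $v|_{\Gamma'} = 0$, so $v \in H^1_{0,\Gamma'}(\Omega)$. I would next verify that $v$ is not identically zero by a short geometric argument: otherwise $u$ would be constant along every line parallel to $b$, and since $\overline{\Gamma}\cup\overline{\Gamma'} = \partial\Omega$ and $\Gamma'$ lies in a hyperplane normal to $b$, every such line through $\Omega$ exits through $\Gamma$, where $u=0$, forcing $u\equiv 0$.

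The main step is to bound the Rayleigh quotient of $v$ from above by $\lambda_1^\Gamma(V)$; combined with the variational principle \eqref{eq:variational} applied in $H^1_{0,\Gamma'}(\Omega)$ this yields \eqref{eq:schrodinger higherdim ineq}. Starting from $\lambda_1^\Gamma(V)\int_\Omega v^2 = \lambda_1^\Gamma(V)\int_\Omega \nabla u \cdot bb^\top\nabla u$ and integrating by parts as in \eqref{eq:half}, the boundary contribution vanishes entirely: $u|_\Gamma = 0$ and on $\Gamma'$ one has $(bb^\top\nabla u)\cdot\nu = (b\cdot\nabla u)(b\cdot\nu) = 0$. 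The resulting volume integral is transformed via the eigenvalue equation into $\int_\Omega \Delta u \cdot \mathrm{div}(bb^\top \nabla u)$, which then has to be rewritten as $\int_\Omega |\nabla v|^2$ plus boundary terms controlled by the second fundamental form of $\partial\Omega$; this is done by the higher-dimensional analogue of Lemma \ref{lem:Grisvard} announced in the introduction as Lemma \ref{lem:GrisvardDimRed}. The convexity of $\Omega$ ensures the second fundamental form is non-positive with respect to the outward normal, providing the sign for these boundary contributions. The potential term is handled exactly as in Theorem \ref{thm:schrodinger2 gaprime}: under hypothesis (i) the cross-term $\int_\Omega u v(b\cdot\nabla V)$ vanishes, whereas under (ii) the computation \eqref{eq:step 3 bV comp} applies verbatim, producing a non-negative volume contribution from the concavity of $V$ and a non-negative boundary contribution from $b\cdot\nabla V|_{\Gamma'}\ge 0$ combined with $(b\cdot\nu)|_{\Gamma'} = 1$.

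The main obstacle I expect is the sign analysis of the boundary terms on $\Gamma$ that arise from the higher-dimensional Grisvard-type identity. In contrast to the planar case, where the curvature is a single scalar and only finitely many corner terms appear, in $d\ge 3$ one must decompose $b$ pointwise into its components normal and tangential to $\Gamma$, use $u|_\Gamma = 0$ to rewrite $\nabla u = (\partial_\nu u)\nu$ and $v = (b\cdot\nu)\partial_\nu u$ on $\Gamma$, and then perform a surface integration by parts to extract the second fundamental form $\mathrm{II}(b_T,b_T)\le 0$. This surface integration generates $(d-2)$-dimensional interface integrals on $\overline{\Gamma}\cap\overline{\Gamma'}$, and the angle hypothesis at most $\pi/2$, combined with the flatness of $\Gamma'$, is precisely what ensures these interface contributions have the correct sign. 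Finally, the Laplacian statement \eqref{eq:laplacian higherdim ineq} is the special case $V\equiv 0$ of \eqref{eq:schrodinger higherdim ineq}, in which the potential-related terms vanish automatically and the argument reduces to the curvature sign comparison alone.
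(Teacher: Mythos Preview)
Your overall strategy---take $v=b\cdot\nabla u$ as test function, use Proposition~\ref{prop:regularity reflection} for $H^2$ regularity, verify $v\in H^1_{0,\Gamma'}(\Omega)$ is non-trivial, and bound its Rayleigh quotient via integration by parts---matches the paper exactly, as does your treatment of the potential terms under hypotheses (i) and (ii).

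Where you diverge from the paper is in your anticipated ``main obstacle''. You expect Lemma~\ref{lem:GrisvardDimRed} to produce boundary terms on $\Gamma$ involving the second fundamental form, surface integration by parts, and $(d-2)$-dimensional interface contributions on $\overline{\Gamma}\cap\overline{\Gamma'}$ whose signs must be controlled by the angle hypothesis. None of this arises in the paper's argument. Lemma~\ref{lem:GrisvardDimRed} is stated and used as a clean inequality $\int_\Omega(\partial_1\partial_j u)^2\le\int_\Omega(\partial_1^2 u)(\partial_j^2 u)$ with no explicit boundary terms; summing over $j$ gives directly $\int_\Omega|\nabla v|^2\le\int_\Omega(\partial_{11}u)\Delta u$. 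The curvature sign is absorbed entirely into the \emph{proof} of Lemma~\ref{lem:GrisvardDimRed}, which proceeds by a dimension-reduction trick: one slices $\Omega$ by two-dimensional planes parallel to $\mathrm{span}\{e_1,e_j\}$, applies the planar Lemma~\ref{lem:Grisvard} on each convex slice (where the scalar curvature $\kappa\le 0$ gives the sign), and integrates over the transverse variables. No $d$-dimensional second fundamental form computation is ever performed, and the angle assumption $\le\pi/2$ is used only for $H^2$ regularity via the reflection argument, not for interface sign control.

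Your direct route via the full $d$-dimensional Grisvard identity would in principle also work, but it is substantially more technical and you have not carried it out; the slicing approach is what makes the paper's proof short.
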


Some examples of three-dimensional domains to which Theorem \ref{thm:higherdim} applies are collected in Figure \ref{fig:solids}; by Proposition \ref{prop:regularity reflection}, Theorem \ref{thm:higherdim} holds for any $d$-dimensional domain which can be reflected over the hyperplane of $\ga'$ to obtain a convex domain. Examples of potentials satisfying the assumptions of Theorem \ref{thm:higherdim} can be easily obtained by generalizing Examples \ref{example:first} -- \ref{example:last} to dimensions higher than two.
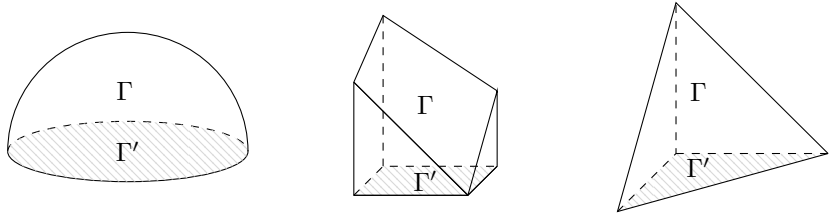
\begin{figure}[h]
\begin{minipage}[c][5cm][c]{0,3\textwidth}
\centering
\begin{tikzpicture}[scale=0.4]
 % base circle
 \draw (-3.95,.5) arc (180:360:3.95 and 1);
 \draw [dashed] (-3.95,.5) arc (180:0:3.95 and 1);
\fill[pattern=north west lines, pattern color=gray!30!white] (3.95,.5) arc (0:360:3.95 and 1);
 % hemisphere
 \draw (-3.95,.5) arc (180:0:3.95 and 3.95);
 % other labels
 \draw (0,0.5) node () {$\ga'$};
 \draw (-0.1,2.5) node () {$\ga$};
\end{tikzpicture}
\end{minipage}
\begin{minipage}[c][5cm][c]{0,3\textwidth}
\centering
\begin{tikzpicture}[scale=0.5]
\coordinate[] (O) at (0,0,0);
\coordinate[] (A2) at (0,4,0);
\coordinate[] (B1) at (3,0,0);
\coordinate[] (B2) at (0,0,2);
\coordinate[] (B3) at (3,0,2);
\coordinate[] (C2) at (0,3,2);
\coordinate[] (C3) at (3,2,0);
\fill[pattern=north west lines, pattern color=gray!30!white](B2)--(B3)--(B1)--(O);
%\draw[fill=blue,opacity=0.3] (A1)--(C2)--(B3);
\draw (B2)--(B3)--(B1);
\draw [dashed] (B2)--(O)--(B1);
\draw [dashed] (A2)--(O);
%\draw[fill=red,opacity=0.3] (A3)--(C3)--(B3);
\draw (B3)--(B1)--(C3)--(B3);
\draw (C2)--(B2)--(B3);
\draw (B3)--(C2)--(A2)--(C3);
\draw (C2)--(B3);
\draw (1.5,0,1) node () {$\ga'$};
\draw (1.5,2,1) node () {$\ga$};
\end{tikzpicture}
\end{minipage}
\begin{minipage}[c][5cm][c]{0,3\textwidth}
\centering
\begin{tikzpicture}[scale=0.4]
\coordinate[] (O) at (0,0,0);
\coordinate[] (A) at (5,0,0);
\coordinate[] (B) at (0,0,5);
\coordinate[] (C) at (0,5,0);
\fill[pattern=north west lines, pattern color=gray!30!white](A)--(B)--(O)--(A);
\draw (A)--(B)--(C)--(A);
\draw [dashed] (B)--(O)--(C);
\draw [dashed] (A)--(O);
\draw (1.2,0,1.2) node () {$\ga'$};
\draw (1.2,2.5,1.2) node () {$\ga$};
\end{tikzpicture}
\end{minipage}
\caption{For these domains, Theorem \ref{thm:higherdim} holds; note that $\ga'$ could be chosen to be any of the faces of the third domain.}
\label{fig:solids}
\end{figure}

The proof of Theorem \ref{thm:higherorder evs} makes use of the following estimate which can be regarded as an extension of Lemma \ref{lem:Grisvard} to higher dimensions; its proof uses a dimension reduction trick from the proof of \cite[Th\'eor\`eme~2.1]{G75} and hinges on the fact that the sign of the signed curvature $\kappa$ introduced in Section \ref{sec:planar} is non-positive on convex domains. 

\begin{lemma}
\label{lem:GrisvardDimRed}
Let $\om \subset \rd$, $d \ge 2$, be bounded, convex Lipschitz domain with piecewise smooth boundary. Let $\ga, \ga' \subset \bnd$ be disjoint, relatively open, non-empty sets such that $\ga'$ is a subset of a $d$-dimensional hyperplane and $\overline{\ga} \cup \overline{\ga'}=\bnd$. Let $\om$ be rotated such that $\ga'$ is perpendicular to the first coordinate axis. Then
\begin{align*}
\int_{\Omega} (\partial_1 \partial_j u)^2 \le \int_{\Omega} (\partial_1^2 u)(\partial_j^2 u)
\end{align*}
holds for all $j \in \{1,\ldots, d\}$ and all real-valued $u \in U^2 (\Omega)$, where
\begin{align*}
 U^2 (\Omega) = \Big\{ w \in H^2 (\Omega) : w |_{\ga} = 0~\text{and}~\partial_\nu w |_{\ga'} = 0 \Big\},
\end{align*}
i.e.\ for all functions in $H^2 (\Omega)$ which satisfy a Dirichlet boundary condition on $\ga$ and a Neumann boundary condition on $\ga'$.
\end{lemma}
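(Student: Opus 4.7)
The plan is to foliate $\Omega$ by two-dimensional slices parallel to the $(x_1,x_j)$-coordinate plane and apply Lemma~\ref{lem:Grisvard} on each slice. The case $j=1$ is trivial since both integrands coincide, so I may assume $j\in\{2,\dots,d\}$. For $x''\in\mathbb{R}^{d-2}$ collecting the coordinates $(x_k)_{k\neq 1,j}$, define the slice
$$
\Omega_{x''}=\bigl\{(x_1,x_j)\in\mathbb{R}^2:(x_1,x_2,\dots,x_d)\in\Omega\bigr\}.
$$
Convexity of $\Omega$ guarantees that each non-empty $\Omega_{x''}$ is a bounded convex planar set, and piecewise smoothness of $\partial\Omega$ yields piecewise smoothness of $\partial\Omega_{x''}$ for a.e.\ $x''$. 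Each boundary point of $\Omega_{x''}$ originates from a point of $\partial\Omega$ lying either in $\Gamma$ or in $\overline{\Gamma'}$. Since the hyperplane containing $\Gamma'$ has unit normal $\pm e_1$, the $\Gamma'$-portion of $\partial\Omega_{x''}$ is a (possibly empty) straight segment $\{x_1=c\}$ whose two-dimensional outer normal is again $\pm e_1$. Hence the Neumann condition $\partial_\nu u=0$ on $\Gamma'$ becomes the 2D Neumann condition $\partial_1 u=0$ on the corresponding arc of the slice, while the Dirichlet condition $u=0$ on $\Gamma$ transfers pointwise to the remaining boundary arcs of $\Omega_{x''}$.

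For a.e.\ $x''$, the restriction $u(\,\cdot\,,\,\cdot\,,x'')$ lies in $H^2(\Omega_{x''})$ and satisfies on each smooth arc of $\partial\Omega_{x''}$ either a Dirichlet or a Neumann condition, so it belongs to the space $V^2(\Omega_{x''})$ required by Lemma~\ref{lem:Grisvard}. The lemma then yields
$$
\int_{\Omega_{x''}}(\partial_1^2 u)(\partial_j^2 u)\,dx_1\,dx_j=\int_{\Omega_{x''}}(\partial_1\partial_j u)^2\,dx_1\,dx_j-\frac{1}{2}\int_{\partial\Omega_{x''}}\kappa\,\bigl(|\partial_1 u|^2+|\partial_j u|^2\bigr)\,d\sigma,
$$
where $\kappa$ denotes the 2D signed curvature of $\partial\Omega_{x''}$ with respect to its outer normal. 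Convexity of $\Omega_{x''}$ gives $\kappa\leq 0$ almost everywhere on $\partial\Omega_{x''}$, so the boundary term is non-negative, whence
$$
\int_{\Omega_{x''}}(\partial_1\partial_j u)^2\,dx_1\,dx_j\leq\int_{\Omega_{x''}}(\partial_1^2 u)(\partial_j^2 u)\,dx_1\,dx_j.
$$
Integrating over $x''$ via Fubini then produces the claimed global estimate.

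The main technical obstacle is justifying the slicewise regularity: that for a.e.\ $x''$ one has $u(\,\cdot\,,\,\cdot\,,x'')\in H^2(\Omega_{x''})$ with the prescribed boundary conditions, and that $\partial\Omega_{x''}$ is piecewise smooth with its signed curvature well defined almost everywhere. The cleanest way to address this is to first establish the identity for sufficiently regular test functions, e.g.\ $u\in C^2(\overline\Omega)$ satisfying the boundary conditions on $\Gamma$ and $\Gamma'$, where both the slicing and the transfer of boundary conditions are transparent, and then pass to the limit by a density argument in $U^2(\Omega)$ exploiting continuity of the bilinear forms $u\mapsto\int_\Omega(\partial_1\partial_j u)^2$ and $u\mapsto\int_\Omega(\partial_1^2 u)(\partial_j^2 u)$ with respect to the $H^2(\Omega)$-norm. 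Piecewise smoothness of $\partial\Omega_{x''}$ for a.e.\ $x''$ is, in turn, a Sard-type statement applied to the finitely many smooth pieces comprising $\partial\Omega$.
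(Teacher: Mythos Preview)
Your argument is correct and follows essentially the same route as the paper: foliate $\Omega$ by two-dimensional slices parallel to the $(x_1,x_j)$-plane, apply Lemma~\ref{lem:Grisvard} on each slice, use convexity to discard the curvature term, and integrate via Fubini. If anything, your discussion of the technical points (slicewise $H^2$-regularity, transfer of boundary conditions, a.e.\ piecewise smoothness of slice boundaries) is more careful than the paper's presentation, which asserts these facts without further comment.
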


\begin{proof}
For $d=2$ the assertion follows immediately from Lemma \ref{lem:Grisvard} as $\kappa(x)\le 0$ holds for almost all $x \in \bnd$ for convex $\om$.

Let now $d \ge 3$. We assume $j \neq 1$ since otherwise the claim is satisfied trivially. Assume $j=2$. For a fixed $\boldsymbol{z} \in \mathbb{R}^{d-2}$ we define the two-dimensional set
\begin{equation*}
\om_{\boldsymbol{z}} = \{(x_1, x_2, \boldsymbol{z})^\top : (x_1, x_2, \boldsymbol{z})^\top \in \om \}
\end{equation*}
as the intersection of $\om$ with the plane $(0, 0, \boldsymbol{z})^\top + \spann \{\ee_1, \ee_2 \}$. Then $\om_{\boldsymbol{z}}$ is a bounded convex Lipschitz domain, $u|_{\om_{\boldsymbol{z}} } \in H^2(\om_{\boldsymbol{z}}) \cap V^2(\om_{\boldsymbol{z}})$ for almost all $\boldsymbol{z} \in \mathbb{R}^{d-2}$, and $u|_{\om_{\boldsymbol{z}} }$ satisfies a Dirichlet boundary condition on $\partial \om_{\boldsymbol{z}} \cap \, \ga$ and a Neumann boundary condition on $\partial \om_{\boldsymbol{z}} \cap \, \ga'$ as $\nu|_{\ga'} = \ee_1$ constantly up to sign. We can then apply Lemma \ref{lem:Grisvard} to $u|_{\om_{\boldsymbol{z}} }$ on $\om_{\boldsymbol{z}}$ to obtain 
\begin{align*}
\int_{\Omega_{\boldsymbol{z}}} (\partial_1^2 u)(\partial_2^2 u)= \int_{\Omega_{\boldsymbol{z}}} (\partial_1 \partial_2 u)^2 - \frac{1}{2} \int_{\partial \Omega_{\boldsymbol{z}}} \kappa \abs{\nabla u}^2 \, \textup{d} \sigma 
\end{align*}
for almost all $\boldsymbol{z} \in \mathbb{R}^{d-2}$. Now, we integrate over $\boldsymbol{z}$ to obtain 
\begin{align*}
\int_{\om} (\partial_1 \partial_2 u)^2 &  = \int_{\boldsymbol{z} \in \mathbb{R}^{d-2}} \int_{\om_{\boldsymbol{z}} } (\partial_1 \partial_2 u)^2 \\
& = \int_{\boldsymbol{z} \in \mathbb{R}^{d-2}} \int_{\om_{\boldsymbol{z}}}  (\partial_1^2 u)(\partial_2^2 u) + \frac{1}{2} \int_{\boldsymbol{z} \in \mathbb{R}^{d-2}} \int_{\partial \om_{\boldsymbol{z}}} \kappa \abs{\nabla u}^2 \, \textup{d} \sigma \\
& = \int_{\om}  (\partial_1^2 u)(\partial_2^2 u) + \frac{1}{2} \int_{\boldsymbol{z} \in \mathbb{R}^{d-2}} \int_{\partial \om_{\boldsymbol{z}}} \kappa \abs{\nabla u}^2 \, \textup{d} \sigma
\end{align*}
from which
\begin{align*}
\int_{\om} (\partial_1 \partial_2 u)^2 \le \int_{\om}  (\partial_1^2 u)(\partial_2^2 u)
\end{align*}
follows immediately as $\om_{\boldsymbol{z}}$ is convex and hence $\kappa$ is non-positive almost everywhere on $\partial \om_{\boldsymbol{z}}$. The remaining cases $j=3,\ldots,d$ follow analogously by intersecting $\om$ with an appropriate choice of 2-dimensional plane parallel to $\spann \{\ee_1,\ee_j\}$. 
\end{proof}

\begin{proof}[Proof of Theorem \ref{thm:higherdim}]
We prove inequality \eqref{eq:schrodinger higherdim ineq}; inequality \eqref{eq:laplacian higherdim ineq} follows immediately by choosing $V=0$ identically on $\om$. 

Let $\ga' \subset \bnd$ be an open subset of a $d$-dimensional hyperplane and $\Gamma \subset \partial \Omega$ be relatively open and connected such that $\Gamma \cap \Gamma' = \emptyset$ and the first eigenfunction of $-\Delta_\ga$ belongs to $H^2(\om)$. Without loss of generality we can assume that $\Omega$ is rotated such that $\ga'$ is perpendicular to the first coordinate axis. We then have $b = \ee_1$ identically up to sign; in what follows we assume that $b = \ee_1$. Let $u$ be a real-valued eigenfunction of $-\Delta_\Gamma+V$ corresponding to the eigenvalue $\lvg{1}$, and consider $b \cdot \nabla u = \partial_1 u$, the first partial derivative of $u$. Then, $\partial_1 u \in H^1_{0,\ga'}(\om)$ by Proposition \ref{prop:regularity reflection} and by the Neumann boundary condition $\partial_1 u|_{\Gamma'}=0$; note that $\partial_1 u$ is non-trivial as $\partial_1 u=0$ identically on $\om$ together with $u=0$ on $\ga$ would imply $u=0$ on $\om$. Our aim is to prove that 
\begin{align}
\label{eq:almost1d}
\frac{\int_\Omega |\nabla \partial_1 u|^2 + V|v|^2}{\int_\Omega |\partial_1 u|^2} & \leq \lambda_1^\Gamma(V)
\end{align}
which combined with the variational principle \eqref{eq:minmax} yields $\lambda_1^{\ga'}(V) \le \lambda_1^\ga(V)$. We start by using integration by parts ($\mathbf{0}$ denotes the ($d-1$)-dimensional vector whose components are all 0),
\begin{align}
\label{eq:multidstart}
\begin{split}
\lvg{1} \int_{\Omega} (\partial_{1}u)^2 = & \lvg{1} \int_{\Omega} \begin{pmatrix} \partial_1 u \\ \mathbf{0} \end{pmatrix} \cdot \nabla u \\
& = \lvg{1} \left( - \int_{\Omega} \Div \begin{pmatrix} \partial_1 u  \\ \mathbf{0} \end{pmatrix} u + \int_{\bnd} u \begin{pmatrix} \partial_1 u \\ \mathbf{0} \end{pmatrix} \cdot \nu \,\mathrm{d}\sigma \right) \\
& = - \int_{\Omega} (\partial_{11} u)(- \Delta u + Vu) \\
& = \int_{\Omega} \partial_{11} u \Delta u - \int_{\Omega} \partial_{11} u Vu.
\end{split}
\end{align}
The boundary integral vanishes as $u$ vanishes on $\ga$ and the normal derivative $\partial_\nu u= b \cdot \nabla u = \partial_1 u$ vanishes on $\ga'$, where $\nu=\ee_1$ constantly. We then estimate the first integral on the right-hand side of \eqref{eq:multidstart} using Lemma \ref{lem:GrisvardDimRed},
\begin{align}
\label{eq:auxmultid1}
\begin{split}
\int_{\Omega} \partial_{11} u \Delta u = \int_{\Omega} \sum_{\substack{j=1}}^d(\partial_{11} u)(\partial_{jj} u)  \ge \int_{\Omega}  \sum_{j=1}^d (\partial_{1j} u)^2 = \int_{\Omega} \abs{\nabla (\partial_1u)}^2.
\end{split}
\end{align}
As for the second integral on the right-hand side of \eqref{eq:multidstart}, we use integration by parts,
\begin{align}
\label{eq:auxmultid2}
\begin{split}
\int_{\Omega} \partial_{11} u Vu & = \int_{\Omega} \Div \begin{pmatrix} \partial_1 u \\ \mathbf{0} \end{pmatrix} Vu \\
& = - \int_{\Omega} \begin{pmatrix} \partial_1 u \\ \mathbf{0} \end{pmatrix} \cdot \nabla (Vu) + \int_{\bnd} Vu \begin{pmatrix} \partial_1 u \\ \mathbf{0} \end{pmatrix} \cdot \nu \,\mathrm{d}\sigma  \\
& = - \int_{\Omega} (\partial_1 u)(\partial_1 (Vu)) \\
& = - \int_{\Omega} V (\partial_1 u)^2 - \int_{\Omega} u (\partial_1 u)(\partial_1 V)
\end{split}
\end{align}
where as above the boundary integral vanishes by the boundary conditions imposed on $u$. By plugging \eqref{eq:auxmultid1} and \eqref{eq:auxmultid2} into \eqref{eq:multidstart} we obtain
\begin{align}
\label{eq:dai}
\lvg{1} \int_{\Omega} (\partial_{1}u)^2 \ge \int_{\Omega} \abs{\nabla (\partial_1u)}^2 + \int_{\Omega} V (\partial_1 u)^2 + \int_{\Omega} u (\partial_1 u)(\partial_1 V).
\end{align}
Now, if condition (i) holds, $b \cdot \nabla V = \partial_1 V = 0$ identically on $\om$ and \eqref{eq:almost1d} follows immediately from \eqref{eq:dai}. Else, if condition (ii) holds, we use integration by parts to compute
\begin{align*}
\int_{\Omega} u (\partial_1 u)(\partial_1 V) & = \frac{1}{2} \int_{\Omega} (\partial_1 u^2)(\partial_1 V) = \frac{1}{2} \int_{\Omega} \diver \begin{pmatrix} u^2 \\ \mathbf{0} \end{pmatrix} \partial_1 V  \\
& = - \frac{1}{2} \int_{\Omega} \begin{pmatrix} u^2 \\ \mathbf{0} \end{pmatrix} \cdot \nabla (\partial_1 V) + \frac{1}{2} \int_{\bnd} \partial_1 V \begin{pmatrix} u^2 \\ \mathbf{0} \end{pmatrix} \cdot \nu\,\mathrm{d}\sigma \\
& = - \frac{1}{2} \int_{\Omega} u^2 \partial_{11} V + \frac{1}{2} \int_{\bnd} \partial_1 V u^2 \nu_1 \,\mathrm{d}\sigma \\
& = -\frac{1}{2} \int_{\om} b^\top H_V b u^2 + \frac{1}{2} \int_{\ga'} (b \cdot \nabla V)(b \cdot \nu) u^2\, \mathrm{d}\sigma 
\end{align*}
where in the last step we used $u|_{\ga}=0$ and $b = \ee_1$. Condition (ii) now yields that both integrals on the right-hand side are non-negative since $V$ is concave and hence the associated Hessian matrix $H_V$ is non-positive and $(b \cdot \nabla V)(b \cdot \nu) \ge 0$ on $\ga'$. We have proved that 
\begin{equation*}
\int_{\Omega} u (\partial_1 u)(\partial_1 V)  \ge 0
\end{equation*}
which combined with \eqref{eq:dai} implies \eqref{eq:almost1d}. This concludes the proof.
\end{proof}

\section{An inequality between higher-order mixed and Dirichlet eigenvalues of Schr\"odinger operators}
\label{sec:kth ev}

In this section we compare mixed Dirichlet-Neumann eigenvalues with pure Dirichlet eigenvalues of Schr\"odinger operators. In order to do so, we make the additional assumption that the bounded, convex domain $\om \subset \rd$, $d \ge 2$, is polyhedral, and compare higher-order mixed eigenvalues with pure Dirichlet eigenvalues of the Schr\"odinger operator on such domains. In order to avoid ambiguities we give the following definition.

\begin{definition}
Let $\om \subset \rd$, $d \ge 2$, be a bounded, connected Lipschitz domain.
\begin{itemize}
\item[(i)] If $d=2$ we say that $\om$ is a polyhedral (or polygonal) domain if $\bnd$ is the union of finitely many line segments.
\item[(ii)] Recursively, if $d \ge 3$ we say that $\om$ is a polyhedral domain if for each $(d-1)$-dimensional affine hyperplane $H \subset \rd$ the intersection $H \cap \om$ is either a polyhedral domain in $\mathbb{R}^{d-1}$ (where we identify $H$ with $\mathbb{R}^{d-1}$) or empty.
\end{itemize}
\end{definition}

Let $\om \subset \rd$, $d \ge 2$, be a bounded, connected Lipschitz domain and $\ga \subset \bnd$ be relatively open and non-empty. By Rademacher's theorem the outer unit normal vector $\nu$ is well-defined almost everywhere on $\bnd$. Consequently, the $(d-1)$-dimensional tangential hyperplane
\begin{equation}
\label{eq:tg hyperplane}
T_{x} = \Big\{ \tau = (\tau_1 \ldots \tau_d)^\top \in \rd : \sum_{j=1}^d \tau_j \nu_j(x)=0 \Big\}
\end{equation}
can be defined for almost all $x \in \bnd$. Let now $\hat{\Gamma} \subseteq \ga$ denote the set of points $x \in \Gamma$ such that the tangential hyperplane $T_x$ exists; by Rademacher's theorem, this set has full measure. We then define the linear subspace
\begin{equation*}
\mathcal{S}(\Gamma) = \bigcap_{x \in \hat{\Gamma}} T_x
\end{equation*}
of $\rd$ consisting of all vectors which are tangential to almost all points of $\Gamma$. Note that $\dim \mathcal{S}(\ga) \in \{0, \ldots, d-1\}$. As observed above, it follows immediately from \cite[Theorem 4.1]{LR17} that if $V_0$ is a constant potential and $V \in L^\infty(\om)$ is real-valued then for each $k_0 \in \mathbb{N}$ there exists $\tau_0>0$ such that 
\begin{equation*}
\lambda_{k+\dim \mathcal{S}(\ga)}(V_0 + \tau V) \le \lambda_k(V_0 + \tau V)
\end{equation*}
holds for all $k \le k_0$ and $\tau \in \mathbb{R}$ such that $|\tau|<\tau_0$. Again, this type of inequality depends on the geometry of the portion $\ga$ of the boundary and on the strength of the potential and is therefore of limited relevance; in the following result we establish an inequality which instead depends on the geometry of $\ga$ and of the potential $V$ with respect to it. In order to formulate it, we denote by $\Omega'$ the set of all $x \in \Omega$ where $V$ is differentiable, which by Rademacher's theorem has full measure since $V \in W^{1,\infty}(\Omega)$ and is thus Lipschitz continuous, and define
\begin{equation*}
\big( \nabla V \big)^{\perp} = \bigcap_{x \in \Omega'} \big( \nabla V(x) \big)^{\perp} 
\end{equation*}
as the intersection of the orthogonal complements of $\nabla V(x)$ in $\rd$ for all $x \in \Omega$ where the gradient is defined classically; note that $\dim (\nabla V)^{\perp} \in \{0, \ldots, d\}$ where $\dim (\nabla V)^{\perp} = d$ if and only if $V$ is a constant potential. We then consider the set of vectors $(\nabla V)^{\perp} \cap \, \mathcal{S}(\Gamma)$ and establish an inequality
which depends on the dimension of this set, as follows.

\begin{theorem}
\label{thm:higherorder evs}
Let $\Omega \subset \rd$, $d \ge 2$, be a bounded, connected, convex, polyhedral domain, $\ga \subset \bnd$ be non-empty and relatively open and $V \in W^{1,\infty}(\Omega)$ be real-valued. Then
\begin{equation*}
\lvg{k+\dim ((\nabla V)^{\perp} \cap \, \mathcal{S}(\Gamma))} \le \lv{k}
\end{equation*}
holds for all $k \in \mathbb{N}$.
\end{theorem}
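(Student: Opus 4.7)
The plan is to apply the min-max principle \eqref{eq:minmax} to an explicit $(k+m)$-dimensional subspace of $\hmix$, where $m = \dim((\nabla V)^\perp \cap \mathcal{S}(\Gamma))$. Let $u_1, \ldots, u_k$ be an $L^2$-orthonormal system of real-valued Dirichlet eigenfunctions corresponding to $\lambda_1(V), \ldots, \lambda_k(V)$; since $\Omega$ is a convex polyhedron, standard elliptic regularity gives $u_i \in H^2(\Omega) \cap H^1_0(\Omega)$. Fix an orthonormal basis $b_1, \ldots, b_m$ of $(\nabla V)^\perp \cap \mathcal{S}(\Gamma) \subset \mathbb{R}^d$ and define $v_j = b_j \cdot \nabla u_k$. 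On any smooth face $F$ of $\Gamma$ the identity $u_k|_F = 0$ yields $\nabla u_k = (\partial_\nu u_k)\nu^F$, and $b_j \cdot \nu^F = 0$ (since $b_j \in \mathcal{S}(\Gamma)$) gives $v_j|_\Gamma = 0$, so $v_j \in \hmix$. The proposed test space is $L = \mathrm{span}\{u_1, \ldots, u_k, v_1, \ldots, v_m\}$.

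I would first verify $\dim L = k+m$. A relation $\sum_i \alpha_i u_i = b \cdot \nabla u_k$ with $b = \sum_j \beta_j b_j \ne 0$ would force $(b \cdot \nu)\,\partial_\nu u_k = 0$ on $\partial \Omega$, since each $u_i$ vanishes on $\partial \Omega$. Boundedness of $\Omega$ forces the face normals to span $\mathbb{R}^d$, so some face of $\partial \Omega$ has $b \cdot \nu^F \ne 0$; the tangentiality of $b$ to every face of $\Gamma$ places this face in $\Gamma^c$, and the resulting $u_k|_F = \partial_\nu u_k|_F = 0$ on an open piece of $\partial \Omega$ contradicts Lemma~\ref{lem:continuation principle}. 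Hence $b = 0$, and orthogonality then gives $\alpha_i = 0$.

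For the Rayleigh quotient on $L$, I write $w = \sum_i \alpha_i u_i + \sum_j \beta_j v_j$ and exploit that $b_j \cdot \nabla V = 0$ almost everywhere, which yields $-\Delta v_j + V v_j = \lambda_k(V)\, v_j$ in the sense of distributions. Green's identity combined with $u_i|_{\partial \Omega} = 0$ and $v_j|_\Gamma = 0$ reduces the mixed and diagonal quadratic-form pairings to $\int_\Omega \nabla u_i \cdot \nabla v_j + V u_i v_j = \lambda_k(V) \int_\Omega u_i v_j$ and $\int_\Omega \nabla v_j \cdot \nabla v_{j'} + V v_j v_{j'} = \lambda_k(V) \int_\Omega v_j v_{j'} + \int_{\Gamma^c} v_{j'} \partial_\nu v_j\, \mathrm{d}\sigma$, so the only surviving contributions to $\int_\Omega (|\nabla w|^2 + Vw^2) - \lambda_k(V)\int_\Omega w^2$ come from the $u_i$-diagonal and a single boundary term. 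Writing $\bar v = \sum_j \beta_j v_j = b \cdot \nabla u_k$ with $b = \sum_j \beta_j b_j$,
\begin{equation*}
\int_\Omega \bigl(|\nabla w|^2 + V w^2\bigr) - \lambda_k(V) \int_\Omega w^2 = \sum_{i=1}^k \alpha_i^2\bigl(\lambda_i(V) - \lambda_k(V)\bigr) + \int_{\Gamma^c} \bar v\, \partial_\nu \bar v\, \mathrm{d}\sigma,
\end{equation*}
and the first sum is non-positive because $\lambda_i(V) \le \lambda_k(V)$.

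The hardest point, which I expect to require the most care, is to show that the remaining boundary term is non-positive. On each face $F \subset \Gamma^c$ with constant outward normal $\nu^F$, the Dirichlet PDE together with the vanishing of the intrinsic Laplacian of $u_k$ along $F$ forces $\partial_\nu^2 u_k = 0$ on $F$; decomposing $b = b_\parallel^F + (b\cdot\nu^F)\nu^F$ and using $\bar v|_F = (b\cdot\nu^F)\,\partial_\nu u_k$ then yields $\bar v\,\partial_\nu \bar v = \tfrac{1}{2}(b\cdot\nu^F)\, b_\parallel^F \cdot \nabla_F\bigl((\partial_\nu u_k)^2\bigr)$ on $F$. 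Tangential integration by parts on $F$, exploiting that $b_\parallel^F$ is a constant tangential vector field, reduces $\int_F \bar v\,\partial_\nu \bar v\, \mathrm{d}\sigma$ to one-dimensional integrals of $(\partial_\nu u_k)^2$ over the $(d-2)$-dimensional edges of $F$. The crucial geometric input is that $\nabla u_k$ vanishes on every such edge $E = F_1 \cap F_2$: the Dirichlet condition on both adjacent faces constrains the trace of $\nabla u_k$ from each side to be a multiple of $\nu^{F_1}$ and of $\nu^{F_2}$ respectively, and the linear independence of these normals forces both multiples to vanish. All edge contributions therefore collapse, giving $\int_{\Gamma^c} \bar v\,\partial_\nu\bar v\,\mathrm{d}\sigma = 0$, and the min-max principle yields $\lambda^\Gamma_{k+m}(V) \le \lambda_k(V)$.
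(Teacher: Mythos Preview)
Your test space and overall strategy coincide with the paper's: both use $\mathrm{span}\{u_1,\dots,u_k\}$ together with directional derivatives $b\cdot\nabla u_k$ for $b\in(\nabla V)^\perp\cap\mathcal{S}(\Gamma)$, and your dimension count via unique continuation is essentially the same argument as the paper's Step~2. The substantive divergence is in how the quadratic form on $\bar v=b\cdot\nabla u_k$ is handled. The paper never produces a boundary term: it invokes the domain identity
\[
\int_\Omega(\partial_{ml}u)(\partial_{mj}u)=\int_\Omega(\partial_{mm}u)(\partial_{lj}u),\qquad u\in H^2(\Omega)\cap H^1_0(\Omega),
\]
on convex polyhedra (Lemma cited from \cite{LR17}), which turns $\int_\Omega|\nabla\bar v|^2$ directly into $\int_\Omega(\Delta u_k)\,\diver(bb^\top\nabla u_k)$; one more integration by parts, where only the trace of $u_k$ itself appears, then gives $\mathfrak a[\bar v]=\lambda_k(V)\|\bar v\|^2$ outright. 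Everything is justified at the level of $H^2\cap H^1_0$.

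Your route instead pushes the computation to the duality term $(\partial_\nu\bar v,\bar v)_{\partial\Omega}$ and then evaluates it face by face. The formal manipulations are correct, but they exceed the available regularity: with only $u_k\in H^2(\Omega)$ one has $\bar v\in H^1(\Omega)$, so $\partial_\nu\bar v$ exists merely in $H^{-1/2}(\partial\Omega)$. Writing $\partial_\nu\bar v|_F=b_\parallel^F\cdot\nabla_F(\partial_\nu u_k)+(b\cdot\nu^F)\partial_\nu^2 u_k$, asserting $\partial_\nu^2 u_k|_F=0$ from the equation, integrating $\nabla_F[(\partial_\nu u_k)^2]$ by parts on $F$, and finally taking traces of $\nabla u_k$ on $(d-2)$-dimensional edges all require $u_k$ to have second-order boundary traces and $\nabla u_k$ to have edge traces --- none of which follow from $H^2$ alone. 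On a convex polyhedron Dirichlet eigenfunctions are generically no better than $H^{2+\varepsilon}$ near edges with dihedral angles close to $\pi$, so these steps are not justified as written. One could try to close the gap via a careful edge--vertex singularity expansion (the convexity does force $\nabla u_k\to 0$ at edges), but that is a substantial piece of analysis you would need to add; the paper's integral identity sidesteps it entirely.
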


This inequality can be regarded as a unification of \cite[Theorem 4.2]{R21} and \cite[Theorem 4.1]{LR17}. Roughly speaking, it is non-trivial, i.e. $\dim ((\nabla V)^{\perp} \cap \, \mathcal{S}(\Gamma)) \ge 1$, if the potential $V$ does not depend on some of the directions which are orthogonal to the vectors tangential to $\Gamma$. 

We now provide some examples of choices of $\ga$ and $V$ where $\dim ((\nabla V)^{\perp} \cap \, \mathcal{S}(\Gamma))$ can be computed.

\begin{example}
\label{example:2d potential g}
Let $\om \subset \mathbb{R}^2$ be a convex polygon and choose $\ga$ to be one of its sides or the union of two parallel sides; we assume without loss of generality that $\om$ is rotated such that $\ga$ is parallel to the $x_2$-axis. Thus, $\mathcal{S}(\ga) = \{ (0,c)^\top : c \in \mathbb{R} \}$. Now, consider the potential $V(x) = g(x_1)$ for $x=(x_1,x_2)^\top \in \om$ where $g \in W^{1,\infty}(\{ x_1 \in \mathbb{R} : (x_1, x_2)^\top \in \om \})$ is real-valued and such that $g'$ is not identically zero. Then $\nabla V = (g'(x_1),0)^\top$ and $(\nabla V)^\perp = \mathcal{S}(\ga)$, that is, $\dim ((\nabla V)^\perp \cap \mathcal{S}(\ga)) = 1$; Theorem \ref{thm:higherorder evs} then yields that $\lambda_{k+1}^\ga(V) \le \lambda_k(V)$ holds for all $k \in \mathbb{N}$. 
\end{example}

\begin{example}
\label{example:higherdim potential g}
We can generalize Example \ref{example:2d potential g} to higher dimensions as follows. Let $\om \subset \mathbb{R}^d$ be a convex polyhedron and choose $\ga$ to be one of its faces or the union of two parallel faces; we assume without loss of generality that $\om$ can be rotated such that $\ga$ is parallel to the $\{x_2, \ldots, x_d\}$ plane. Thus, $\mathcal{S}(\ga) = \{ (0, c_1, \ldots, c_{d-1}) : c_1, \ldots, c_{d-1} \in \mathbb{R} \}$. We consider the potential $V(x) = g(x_1)$  for $x=(x_1,\ldots, x_d)^\top \in \om$ where $g \in W^{1,\infty}(\{ x_1 \in \mathbb{R} : (x_1, ,\ldots, x_d)^\top \in \om \})$ is real-valued and such that $g'$ is not identically zero. Then $\nabla V = (g'(x_1), \ldots, 0)^\top$ and $(\nabla V)^\perp = \mathcal{S}(\ga)$, from which $\dim ((\nabla V)^\perp \cap \mathcal{S}(\ga)) = d-1$. By Theorem \ref{thm:higherorder evs} we thus have that $\lambda_{k+d-1}^\ga(V) \le \lambda_k(V)$ for all $k \in \mathbb{N}$. 
\end{example}

\begin{example}
Consider a convex polyhedral domain $\om \subset  \mathbb{R}^3$ and the potential
\begin{equation*}
V(x) = a e^{b (x_1 + x_2 + x_3)}
\end{equation*}
for $x=(x_1, x_2, x_3)^\top \in \om$ and $a,b \in \mathbb{R} \setminus \{0\}$. All partial derivatives of $V$ of first order equal $b V$, thus $\nabla V \subseteq \spann \{ b (1, 1, 1)^\top V \}$ and $\dim (\nabla V)^{\perp}=3-1=2$. In particular two directions which $V$ does not depend on are the directions $(1, -1, 0)$ and $(0,1,-1)$. Thus $(\nabla V)^\perp = (1,1,1)$ and we get $\dim ((\nabla V)^\perp \cap \mathcal{S}(\ga)) = 3-1=2$ for any choice of $\ga \subset \bnd$ such that $(1,1,1) \in \mathcal{S}(\ga)$.
\end{example}

Next, we present an immediate consequence of Theorem \ref{thm:higherorder evs} where a strict inequality holds. It follows from the fact that on a bounded, connected, Lipschitz domain $\om$ the strict inequality $\lambda_k^\ga < \lambda_k^{\ga'}$ holds for any relatively open, non-empty sets $\ga \subset \ga' \subset \bnd$ such that $\ga' \setminus \ga$ has non trivial interior; the proof relies on the unique continuation principle of Lemma \ref{lem:continuation principle} and can be found for instance in \cite[Proposition 2.3]{LR17}.

\begin{corollary}
\label{cor:last section}
Let $\Omega \subset \rd$, $d \ge 2$, be a bounded, connected, convex, polyhedral domain and $V \in W^{1,\infty}(\Omega)$ be real-valued. Let $\ga \subset \bnd$ be non-empty, relatively open and let $\Sigma \subset \ga$ such that $\ga \setminus \Sigma$ has non-empty interior. Then
\begin{equation*}
\lambda^\Sigma_{k+\dim ((\nabla V)^{\perp} \cap \mathcal{S}(\Gamma))} < \lv{k}
\end{equation*}
holds for all $k \in \mathbb{N}$.
\end{corollary}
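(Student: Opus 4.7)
The plan is to deduce this corollary by combining Theorem \ref{thm:higherorder evs} with a strict monotonicity statement for mixed eigenvalues under enlargement of the Dirichlet portion of the boundary. Set $m := \dim \bigl( (\nabla V)^\perp \cap \mathcal{S}(\Gamma) \bigr)$. Theorem \ref{thm:higherorder evs} applied to the pair $(\Omega, \Gamma)$ gives the non-strict inequality
\begin{equation*}
\lambda^\Gamma_{k+m}(V) \le \lambda_k(V), \qquad k \in \mathbb{N}.
\end{equation*}
What remains is to sharpen the first inequality of \eqref{eq:trivial estimates}, applied with $\Gamma' = \Sigma$ and with index $k+m$, into a strict inequality
\begin{equation*}
\lambda^\Sigma_{k+m}(V) < \lambda^\Gamma_{k+m}(V).
\end{equation*}
Chaining these two inequalities then yields the claim.

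For the strict monotonicity I would invoke the unique continuation principle in the same way as in \cite[Proposition 2.3]{LR17}, which is precisely the argument referenced in the paragraph following \eqref{eq:trivial estimates}. The idea is standard: the min-max principle \eqref{eq:minmax} already gives $\lambda^\Sigma_{k+m}(V) \le \lambda^\Gamma_{k+m}(V)$ because $H^1_{0,\Gamma}(\Omega) \subset H^1_{0,\Sigma}(\Omega)$. If equality held, then one of the first $k+m$ linearly independent eigenfunctions of $-\Delta_\Sigma + V$ could be chosen in $H^1_{0,\Gamma}(\Omega) \setminus \{0\}$ and would, by the variational characterization, be an eigenfunction of $-\Delta_\Gamma + V$ as well. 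Such a function $u$ would then satisfy $u = 0$ on $\Gamma$ (in particular on $\Gamma \setminus \overline{\Sigma}$) together with $\partial_\nu u = 0$ on $\Sigma^c \supset \Gamma \setminus \overline{\Sigma}$; since $\Gamma \setminus \Sigma$ has non-empty interior, Lemma \ref{lem:continuation principle} forces $u \equiv 0$ on $\Omega$, a contradiction.

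The main (and only) delicate point is therefore verifying that this standard argument really does apply to the $(k+m)$-th eigenvalue and not just to the lowest one, which is why I would state the monotonicity at the level of \eqref{eq:minmax} using a $(k+m)$-dimensional subspace of minimizers and argue by dimension count that at least one element of this subspace must be a nonzero element of $H^1_{0,\Gamma}(\Omega)$ realizing equality in \eqref{eq:minmax}; the unique continuation step via Lemma \ref{lem:continuation principle} then closes the argument exactly as in the cited proof. Once this strict inequality is in hand, the corollary follows immediately from Theorem \ref{thm:higherorder evs}.
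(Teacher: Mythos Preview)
Your proposal is correct and follows exactly the paper's approach: apply Theorem~\ref{thm:higherorder evs} to $(\Omega,\Gamma)$ to obtain $\lambda^\Gamma_{k+m}(V)\le\lambda_k(V)$, then use the strict monotonicity $\lambda^\Sigma_{k+m}(V)<\lambda^\Gamma_{k+m}(V)$ coming from the unique continuation argument of \cite[Proposition~2.3]{LR17} (Lemma~\ref{lem:continuation principle}), and chain the two. The paper states the corollary as an immediate consequence and cites precisely the same monotonicity fact and reference you invoke.
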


We now prove Theorem \ref{thm:higherorder evs}; we will make use of the following integration-by parts lemma. A proof can be found in \cite[Lemma A.1]{LR17}.

\begin{lemma}
Let $\om \subset \rd$, $d \ge 2$, be a polyhedral, convex domain and let $u \in H^2(\om) \cap H^1_0(\om)$. Then 
\begin{equation}
\label{eq:index id}
\int_{\Omega} (\partial_{ml} u)(\partial_{mj} u) = \int_{\Omega} (\partial_{mm} u)(\partial_{lj} u), \quad u \in \huz \cap \hd
\end{equation}
holds for all $m,l,j \in \{ 1 \ldots d\}$.
\end{lemma}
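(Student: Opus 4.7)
The plan is to reduce the identity to the two-dimensional Grisvard identity of Lemma~\ref{lem:Grisvard} via polarization followed by a dimensional-slicing argument. First, I introduce the symmetric bilinear form
\begin{equation*}
B(\alpha, \beta) := \int_\Omega (\partial_m \partial_\alpha u)(\partial_m \partial_\beta u) - \int_\Omega (\partial_m^2 u)(\partial_\alpha \partial_\beta u), \qquad \alpha, \beta \in \rd,
\end{equation*}
where $\partial_\alpha := \sum_i \alpha_i \partial_i$. The claim \eqref{eq:index id} is exactly $B(e_l, e_j) = 0$, and since $B$ is bilinear and symmetric in $(\alpha, \beta)$ (using equality of mixed partials for $u \in H^2(\Omega)$), polarization reduces this to showing $B(\alpha, \alpha) = 0$ for every $\alpha \in \rd$.

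If $\alpha$ is parallel to $e_m$ the equality $B(\alpha, \alpha) = 0$ is immediate. Otherwise, I let $P$ be the two-dimensional linear plane spanned by $e_m$ and $\alpha$ and slice $\Omega$ by the affine translates $P + z$ for $z \in P^\perp$, imitating the dimensional reduction of Lemma~\ref{lem:GrisvardDimRed}. Since $\Omega$ is a convex polyhedron, each non-empty $\Omega_z := \Omega \cap (P + z)$ is a convex polygon whose edges are straight segments, being the intersections of the faces of $\Omega$ with $P + z$. A Fubini argument yields $u_z := u|_{\Omega_z} \in H^2(\Omega_z)$ for almost every $z$; because $\partial \Omega_z \subset \partial \Omega$, the Dirichlet condition passes to the slice, so $u_z \in H^1_0(\Omega_z) \cap H^2(\Omega_z) \subset V^2(\Omega_z)$.

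Next, choosing a unit vector $\hat\beta \in P$ perpendicular to $e_m$ and writing $\alpha = (\alpha \cdot e_m) e_m + c \hat\beta$ with $c = \alpha \cdot \hat\beta \neq 0$, a direct expansion gives
\begin{equation*}
(\partial_m \partial_\alpha u_z)^2 - (\partial_m^2 u_z)(\partial_\alpha^2 u_z) = c^2 \big[ (\partial_m \partial_{\hat\beta} u_z)^2 - (\partial_m^2 u_z)(\partial_{\hat\beta}^2 u_z) \big].
\end{equation*}
Applying Lemma~\ref{lem:Grisvard} in the orthonormal coordinates $(e_m, \hat\beta)$ inside the plane $P + z$ yields
\begin{equation*}
\int_{\Omega_z} \big[ (\partial_m \partial_{\hat\beta} u_z)^2 - (\partial_m^2 u_z)(\partial_{\hat\beta}^2 u_z) \big] = \frac{1}{2} \int_{\partial \Omega_z} \kappa_z |\nabla u_z|^2 \, \textup{d}\ell,
\end{equation*}
where $\kappa_z$ is the signed curvature of $\partial \Omega_z$. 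Since $\partial \Omega_z$ is a finite union of straight segments, $\kappa_z$ vanishes almost everywhere, so this boundary integral is zero. Integrating the slicewise identity over $z \in P^\perp$ and invoking Fubini once more gives $B(\alpha, \alpha) = 0$.

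The main obstacle I foresee is the careful bookkeeping of the Fubini argument: verifying that $u_z \in H^2(\Omega_z) \cap H^1_0(\Omega_z)$ for almost every slice, and confirming that the slicewise integrals recombine into the ambient $d$-dimensional ones when the slicing plane $P$ is not aligned with the coordinate axes. Both are routine, but they warrant attention since one must isolate second partials of $u$ in the non-coordinate directions $e_m$ and $\hat\beta$ and track their restriction to slices.
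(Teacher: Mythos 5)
Your proof is correct. It necessarily differs from the paper's, since the paper does not prove this lemma but defers entirely to \cite[Lemma A.1]{LR17}; your argument is instead self-contained relative to the rest of the paper, deriving the $d$-dimensional polyhedral identity from the planar curvature identity of Lemma~\ref{lem:Grisvard}. The two ingredients --- polarization of the symmetric bilinear form $B$ to reduce a general index pair $(l,j)$ to the diagonal case, and slicing $\Omega$ by planes parallel to $\spann\{e_m,\alpha\}$ so that the curvature term of Lemma~\ref{lem:Grisvard} vanishes identically on the polygonal slices --- are both sound, and the second is precisely the dimension-reduction device the paper itself employs in Lemma~\ref{lem:GrisvardDimRed}, so your proof in effect exhibits \eqref{eq:index id} and Lemma~\ref{lem:GrisvardDimRed} as two instances of a single slicing argument. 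The Fubini-type facts you flag (that $u|_{\Omega_z}\in H^2(\Omega_z)\cap H^1_0(\Omega_z)$ for almost every $z$, and that derivatives along directions contained in the slicing plane commute with restriction) are used without further comment in the paper's own proof of Lemma~\ref{lem:GrisvardDimRed}, so they are at the accepted level of rigor here; the only other point worth recording is that Lemma~\ref{lem:Grisvard} is stated for real-valued functions, so for complex $u$ one applies your argument to the real and imaginary parts, which is harmless since $B$ is real-bilinear and the paper only invokes \eqref{eq:index id} for real eigenfunctions. What your route buys: it makes the paper self-contained, and it makes transparent that convexity plays essentially no role in the identity itself, since the curvature of a polygonal slice vanishes regardless of its sign; convexity serves only to guarantee that the slices are connected convex polygons and, elsewhere in the paper, that eigenfunctions belong to $H^2$. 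What the citation buys is brevity.
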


\begin{proof}[Proof of Theorem \ref{thm:higherorder evs}]
Fix $k \in \mathbb{N}$ and choose an orthogonal (in $\ld{\Omega}$) family of real-valued eigenfunctions $u_j$ of $-\Delta_{\mathrm{D}}+V$ corresponding to the Dirichlet eigenvalues of the Schr\"odinger operator $\lv{j}$, $j=1 \ldots k$. Then $u_j \in \huz \cap \hd$ by \cite[Proposition 4.8]{AGMT10} since $\Omega$ is convex. We then define
\begin{equation}
\label{eq:test functions}
\Phi = \sum_{j=1}^k a_j u_j \qquad \mathrm{and} \qquad \Psi = b^\top \nabla u_k
\end{equation}
where $a_1, \ldots, a_k \in \C$ are arbitrary and $b=(b_1, \ldots, b_d)^\top \in \mathcal{S}(\Gamma)$. Note that $\Phi \in H^1_0(\om) \cap H^2(\om)$  and $\Psi \in \hu$; note also that $\Psi$ is real-valued. The linear subspace $\mathcal{S}(\Gamma)$ is by definition tangential to almost all points of $\Gamma$ and the Dirichlet boundary condition yields
\begin{equation*}
b \cdot \nabla u_k|_{\Gamma} = 0,
\end{equation*}
that is, $\Psi \in \hmix$ and therefore $\Phi + \Psi \in \hmix$. The function $\Phi + \Psi$ is thus a suitable test function for the operator $-\Delta_\ga + V$. We divide the proof into two steps.

{\bf Step 1.} Let us denote by $\qa[\cdot]$ the quadratic form \eqref{eq:quadrform schrodinger} associated with $-\Delta +V$. We consider
\begin{equation}
\label{eq:quform}
\qa[\Phi + \Psi] = \qa[\Phi] + \qa[\Psi]  + 2 \Real \int_{\Omega} \big( \nabla\Phi \cdot \nabla \Psi + V \Phi \Psi \big).
\end{equation}
Our first aim is to evaluate the three summands on the right-hand side of \eqref{eq:quform} separately in order to get an estimate for the quadratic form $\qa[\Phi + \Psi]$. We start from the first summand,
\begin{equation}
\label{eq:firstsummand}
\qa[\Phi] = \int_{\Omega} \big( \abs{\nabla \Phi}^2 + V \abs{ \Phi}^2 \big).
\end{equation}
First,
\begin{align*}
\int_{\Omega}  \abs{\nabla \Phi}^2 & = \int_{\Omega} \nabla \Phi \cdot \overline{\nabla \Phi} = \int_{\Omega} \bigg( \sum_{l=1}^k a_l \nabla u_l \bigg) \cdot \bigg( \sum_{j=1}^k \overline{a_j} \nabla u_j \bigg) \\
& = \sum_{l,j=1}^k a_l \overline{a_j}  \int_{\Omega} \nabla u_l \cdot \nabla u_j = \sum_{l,j=1}^k a_l \overline{a_j}  \int_{\Omega} (- \Delta u_l) u_j 
\end{align*}
where we used Green's identity \eqref{eq:green} together with $u_j \in \huz$. Then,
\begin{align*}
\int_{\Omega} V \abs{ \Phi}^2  =  \int_{\Omega} V \bigg( \sum_{l=1}^k a_l u_l \bigg) \bigg( \sum_{j=1}^k \overline{a_j u_j} \bigg)   = \sum_{l,j=1}^k a_l \overline{a_j}  \int_{\Omega} V u_l u_j,
\end{align*}
and \eqref{eq:firstsummand} computes as
%\sum_{l,j=1}^k a_l \overline{a_j}  \int_{\Omega} [(- \Delta u_l) u_j + V u_l u_j  ] 
\begin{align*}
\qa[\Phi] & = \sum_{j=1}^k \lv{j} \abs{a_j}^2 \int_{\Omega}  \abs{u_j}^2  \le \lv{k} \int_{\Omega}  \sum_{j=1}^k  \abs{a_j u_j}^2 = \lv{k} \int_{\Omega}  \abs{\Phi}^2 
\end{align*}
due to the orthogonality of the $u_j$. As for the second summand 
\begin{equation}
\label{eq:secondsummand}
\qa[\Psi] = \int_{\Omega} \big( \abs{\nabla \Psi}^2 + V \abs{ \Psi}^2 \big),
\end{equation}
we first compute the following using identity \eqref{eq:index id} 
\begin{align*}
\int_{\Omega} \abs{\nabla \Psi}^2 & = \int_{\Omega} \nabla (b^\top \nabla u_k) \cdot \nabla (b^\top \nabla u_k)  = \sum_{m=1}^d \int_{\Omega} \sum_{l=1}^d b_l \partial_{ml} u_k  \sum_{j=1}^d b_j \partial_{mj} u_k \\
& = \sum_{m=1}^d \int_{\Omega} \partial_{mm} u_k \sum_{l,j=1}^d b_l b_j  \partial_{lj} u_k = \int_{\Omega} \Delta u_k \diver (b b^\top \nabla u_k),
\end{align*}
and then
\begin{align*}
\int_{\Omega} V \abs{ \Psi}^2 & = \int_{\Omega} V \abs{ b^\top \nabla u_k}^2 = \int_{\Omega}  V b^\top \nabla u_k b^\top \nabla u_k   \\
& = \int_{\Omega}  b^\top \nabla (V u_k) b^\top \nabla u_k   -  \int_{\Omega}  u_k b^\top \nabla V b^\top \nabla u_k  \\ 
& = \int_{\Omega}  \nabla (V u_k) \cdot b b^\top \nabla u_k   -  \int_{\Omega}  u_k b^\top \nabla V b^\top \nabla u_k  \\ 
& = - \int_{\Omega}  V u_k \diver (b b^\top \nabla u_k)   -  \int_{\Omega}  u_k b^\top \nabla V b^\top \nabla u_k 
\end{align*}
where in the last step we integrated by parts; the boundary integral vanishes since $u_j \in \huz$. The quadratic form \eqref{eq:secondsummand} thus becomes
\begin{align*}
\qa[\Psi] & = \int_{\Omega} \Delta u_k \Div (b b^\top \nabla u_k)  - \int_{\Omega}  V u_k \Div (b b^\top \nabla u_k)  -  \int_{\Omega}  u_k b^\top \nabla V b^\top \nabla u_k  \\
& = -\lv{k} \int_{\Omega} u_k \Div (b b^\top \nabla u_k)  -  \int_{\Omega}  u_k b^\top \nabla V b^\top \nabla u_k \\
& = \lv{k} \int_{\Omega} \nabla u_k \cdot b b^\top \nabla u_k  -  \int_{\Omega}  u_k b^\top \nabla V b^\top \nabla u_k  \\
& = \lv{k} \int_{\Omega} \abs{\Psi}^2 -  \int_{\Omega}  u_k b^\top \nabla V b^\top \nabla u_k 
\end{align*}
where we again integrated by parts. As for the third summand of \eqref{eq:quform}, we observe that 
\begin{align*}
- \Delta \Psi + V \Psi & = b^\top \nabla (-\Delta u_k) + b^\top \nabla (V u_k) - u_k b^\top \nabla V \\
& = b^\top \nabla (\lv{k} u_k) - u_k b^\top \nabla V \\
& = \lv{k} \Psi - u_k b^\top \nabla V
\end{align*}
holds in the distributional sense, and since by \eqref{eq:green}
\begin{equation*}
\int_{\Omega} \big( \nabla \Phi \cdot \nabla \Psi + V \Phi \Psi \big) = \int_{\Omega} \big( - \Phi \Delta \Psi + V \Phi \Psi  \big)  = \int_{\Omega} \Phi \big( - \Delta \Psi + V \Psi  \big),
\end{equation*}
we obtain
\begin{align*}
\int_{\Omega} \big( \nabla \Phi \cdot \nabla \Psi + V \Phi \Psi \big)  &  = \Phi (\lv{k} \Psi - u_k b^\top \nabla V)  = \lv{k} \int_ {\Omega} \Phi \Psi  - \int_{\Omega} \Phi u_k b^\top \nabla V.
\end{align*}
Finally, putting all three summands together we obtain the following estimate for the quadratic form $\qa[\Phi + \Psi]$:
\begin{align}
\label{eq:final estimate}
\begin{split}
& \qa[\Phi + \Psi] \le \lv{k} \int_{\Omega}  \abs{\Phi}^2  + \lv{k} \int_{\Omega} \abs{\Psi}^2  -  \int_{\Omega}  u_k b^\top \nabla V b^\top \nabla u_k  \\ 
& \phantom{\qa[\Phi + \Psi] \le} \, - 2  \lv{k} \Real \Big( \int_ {\Omega} \Phi \Psi   - \int_{\Omega} \Phi u_k b^\top \nabla V \Big)  \\
& \phantom{\qa[\Phi + \Psi]} \, = \lv{k} \int_{\Omega}  \abs{\Phi + \Psi}^2   - 2 \Real \int_{\Omega} \Phi u_k b^\top \nabla V \\
& \phantom{\qa[\Phi + \Psi] \le} \,  - \int_{\Omega} u_k b^\top \nabla V b^\top \nabla u_k. 
\end{split}
\end{align}
If we now choose $b \in (\nabla V)^{\perp}$, then the last two summands on the right-hand side of \eqref{eq:final estimate} vanish, and we are left with the inequality
\begin{equation}
\label{eq:super final}
\qa[\Phi + \Psi] \le \lv{k} \int_{\Omega}  \abs{\Phi + \Psi}^2. 
\end{equation}

{\bf Step 2.} In order to apply the min-max principle \eqref{eq:minmax}, our aim is to estimate the dimension of the linear space consisting of functions of the form $\Phi + \Psi$ as in \eqref{eq:test functions} where $b \in \mathcal{S}(\Gamma) \cap (\nabla V)^\perp$. In order to do so, we first estimate the dimension of the linear space of functions $\Phi + \Psi$ without any restriction on the choice of $b$. We start by claiming that
\begin{equation}
\label{eq:dim claim}
\dim \big( \lspan \{ u_1, \ldots, u_k, \partial_1 u_k, \ldots, \partial_d u_k \} \big) = k + \dim \big( \lspan \{ \partial_1 u_k, \ldots, \partial_d u_k \} \big).
\end{equation}
By assumption, $\dim \big( \lspan \{ u_1, \ldots, u_k \} \big)=k$. Let $\omega \in \lspan \{ u_1, \ldots, u_k \} \cap \linebreak  \{ \partial_1 u_k, \ldots, \partial_d u_k \}$: then $\omega \in \huz$ and $\omega = \sum_{j=1}^d b_j \partial_j u_k$ for some $b_1 \ldots b_d \in \mathbb{R}$. For a contradiction, assume that the vector $(b_1, \ldots, b_d)^{\top}$ is non-trivial. Let $\Lambda$ be a face of $\bnd$ such that $(b_1, \ldots, b_d)^{\top}$ is not tangential to $\Lambda$, and let $\tau^1, \ldots, \tau^{d-1}$ be $d-1$ linearly independent tangential vectors to $\Lambda$. Then, the system $\{ \tau^1, \ldots, \tau^{d-1}, \linebreak (b_1, \ldots, b_d)^{\top} \}$ is linearly independent. The Dirichlet boundary condition $u_k|_{\Lambda}=0$ implies that
\begin{equation*}
\tau^j \cdot \nabla u_k|_{\Lambda} = 0, \qquad j=1, \ldots, d-1,
\end{equation*} 
while $\omega \in \huz$ yields that 
\begin{equation*}
\omega|_{\Lambda} = (b_1, \ldots, b_d)^{\top} \cdot \nabla u_k|_{\Lambda} = 0.
\end{equation*}
Since the constant outer unit normal $\nu$ on $\Lambda$ can be written as a linear combination of $\{ \tau^1, \ldots, \tau^{d-1} \}$ and $(b_1, \ldots, b_d)^{\top}$, these two identities imply that 
\begin{equation*}
\nu \cdot \nabla u_k|_{\Lambda} = \partial_\nu u_k|_{\Lambda} = 0,
\end{equation*}
which together with $u_k|_{\Lambda} = 0$ implies that $u_k=0$ by Lemma \ref{lem:continuation principle}, a contradiction. Thus $(b_1, \ldots, b_d)^{\top}=0$ and $\omega = 0$, from which we conclude \eqref{eq:dim claim}. Next, note that the partial derivatives $\partial_1 u_k, \ldots, \partial_d u_k$ are linearly independent. To this purpose let $b_1, \ldots, b_d \in \mathbb{C}$ be such that 
\begin{equation}
\label{eq: aux1}
\sum_{j=1}^d b_j \partial_j u_k=0
\end{equation}
on $\Omega$ and assume for a contradiction that we are off the case $b_1 = \ldots = b_d = 0$. We can assume without loss of generality that the vector $(b_1, \ldots, b_d)^{\top}$ is non-trivial so that \eqref{eq: aux1} implies that the derivative of $u_k$ in its direction vanishes on all of $\Omega$. This combined with $u_k|_{\bnd}=0$ yields that $u_k=0$ on $\Omega$, a contradiction. In particular, linearly independent vectors $(b_1, \ldots, b_d)^\top \in \mathcal{S}(\Gamma)$ lead to linearly independent functions $\sum_{j=1}^d b_j \partial_j u_k \in \hmix$. Therefore,
\begin{equation}
\label{eq:tmp dim}
\dim \big(\Psi ~\mathrm{of}~\mathrm{the}~\mathrm{form}~\eqref{eq:test functions} : b \in \mathcal{S}(\Gamma) \big) \ge \dim\,\mathcal{S}(\Gamma).
\end{equation}
Let us now go back to estimate \eqref{eq:super final}. Since we are restricting ourselves to vectors $b=(b_1, \ldots, b_d)^\top$ which belong both to $\mathcal{S}(\Gamma)$ (so that $\Psi \in \hmix$) and to $(\nabla V)^{\perp}$ (so that the two summands in \eqref{eq:final estimate} can be nullified), estimate \eqref{eq:tmp dim} refines to
\begin{equation*}
\dim \big(\Psi ~\mathrm{of}~\mathrm{the}~\mathrm{form}~\eqref{eq:test functions} : b \in (\nabla V)^{\perp} \cap \mathcal{S}(\Gamma) \big) \ge \dim \big((\nabla V)^{\perp} \cap \mathcal{S}(\Gamma) \big)
\end{equation*}
which combined with \eqref{eq:dim claim} finally yields the desired estimate
\begin{equation*}
\dim \big( \Phi + \Psi ~\mathrm{of}~\mathrm{the}~\mathrm{form}~\eqref{eq:test functions} : b \in (\nabla V)^{\perp} \cap \mathcal{S}(\Gamma) \big) \ge k+\dim \big( (\nabla V)^{\perp} \cap \, \mathcal{S}(\Gamma) \big).
\end{equation*}
This means that
\begin{equation*}
\qa[u] \le \lv{k} \int_{\Omega}  \abs{u}^2
\end{equation*}
for all $u$ in a subspace of $\hmix$ of dimension $k+\dim \big( (\nabla V)^{\perp} \cap \, \mathcal{S}(\Gamma) \big)$ or larger. This combined with the min-max principle \eqref{eq:minmax} yields the assertion of the theorem.
\end{proof}

\section*{Acknowledgements}

I would like to sincerely thank my supervisor Jonathan Rohleder for his many comments and suggestions all along this work, and Hans Oude Groeniger for the several fruitful discussions. I am also grateful to Monique Dauge for the insightful exchange on $H^2$-regularity. This research was funded by grant no. 2018-04560 of the Swedish Research Council (VR).

\newpage

\end{document}